\newtheorem{theorem}{Theorem}[section]
\newtheorem{lemma}[theorem]{Lemma}
\newtheorem{define}[theorem]{Definition}
\newtheorem{cor}[theorem]{Corollary}
\newtheorem{prop}[theorem]{Proposition}
\newcommand{\n}{\mathcal N}
\newcommand{\I}{\mathcal I}
\newcommand{\F}{\mathbb F}
\newcommand{\N}{\mathbb N}
\newcommand{\Z}{\mathbb Z}
\newcommand{\C}{\mathcal C}
\newcommand{\GL}{\mathrm{GL}}
\newcommand{\PGL}{\mathrm{PGL}}
\newcommand{\ord}{\mathrm{ord}}
\newcommand{\doublespace}
\begin{document}

\begin{frontmatter}

\title{On the existence and number of invariant polynomials}

\author{Lucas Reis}
\ead{lucasreismat@gmail.com}
\address{Universidade de S\~{a}o Paulo, Instituto de Ci\^{e}ncias Matem\'{a}ticas e de Computa\c{c}\~{a}o, S\~{a}o
Carlos, SP 13560-970, Brazil.}
\journal{Elsevier}
\begin{abstract}
This paper explores a natural action of the group $\PGL_2(\F_q)$ on the set of monic irreducible polynomials of degree at least two over a finite field $\F_q$. Our main results deal with the existence and number of fixed points and, in particular, we provide some improvements of previous works.

\end{abstract}

\begin{keyword}
Mobius inversion formula; group action; fixed points; enumeration formula
\MSC[2010]{11T06 \sep 11T55\sep 12E20}
\end{keyword}
\end{frontmatter}
\section{Introduction}
Let $\F_q$ be the finite field with $q$ elements, where $q$ is a power of a prime $p$. We have the following transformations on the polynomial ring $\F_q[x]$. 
 
\begin{define}\label{def:mobius-action}
For $A\in \GL_2(\F_q)$ with $A=\left(\begin{matrix}
a&b\\
c&d
\end{matrix}\right)$ and $f\in \F_q[x]$ a polynomial of degree $k$, set
$$A\circ f=(bx+d)^kf\left(\frac{ax+c}{bx+d}\right).$$
Additionally, if $[A]$ denotes the class of $A$ in the group $\PGL_2(\F_q)$, for $f\in \F_q[x]$ a nonzero polynomial, set
$$[A]\circ f=c_{f, A}\cdot (A\circ f),$$
where $c_{f, A}\in \F_q^*$ is the element of $\F_q$ such that $c_{f, A}\cdot (A\circ f)$ is monic.
\end{define}
Let $\I_k$ be the set of monic irreducible polynomials of degree $k$ over $\F_q$. As pointed out in~\cite{ST12}, the group $\PGL_2(\F_q)$ \emph{acts} on the sets $\I_k$ with $k\ge 2$, via the compositions $[A]\circ f$. It is then natural to ask about the fixed points.
\begin{define}
Let $k\ge 2, f\in \I_k$, $[A]\in \PGL_2(\F_q)$ and $G$ a subgroup of $\PGL_2(\F_q)$. 
\begin{enumerate}[(i)]
\item $f$ is $[A]$-invariant if $[A]\circ f=f$;
\item $f$ is $G$-invariant if $[B]\circ f=f$ for any $[B]\in G$.
\end{enumerate}
\end{define}
From the previous definition, some natural questions arise:
\begin{itemize}
\item  Given a subgroup $G$ of $\PGL_2(\F_q)$, there exists $G$-invariants? 
\item  Given $n\ge 2$ and $[A]\in \PGL_2(\F_q)$, there exists $[A]$-invariants of degree $n$? How many are they?
\end{itemize}
In this paper, we deal with the two questions above and our main results can be stated as follows.
\begin{theorem}\label{thm:main-1}
Let $G$ be a noncyclic group of $\PGL_2(\F_q)$. Then any $G$-invariant has degree two.
\end{theorem}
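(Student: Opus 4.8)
The plan is to translate $[A]$-invariance of an irreducible polynomial into the statement that an associated M\"obius transformation preserves the set of its roots, and then to exploit the fact that such transformations commute with the Frobenius.

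First I would set up the dictionary between the action of Definition~\ref{def:mobius-action} and an action on roots. Fix $f\in\I_n$ with $n\ge 2$, let $\alpha\in\F_{q^n}$ be a root, so the root set is the Frobenius orbit $R=\{\alpha,\alpha^q,\dots,\alpha^{q^{n-1}}\}$ of size exactly $n$. A direct computation from $A\circ f=(bx+d)^nf\!\left(\tfrac{ax+c}{bx+d}\right)$ shows that the roots of $A\circ f$ are exactly the image $\psi_A(R)$, where $\psi_A$ is the M\"obius transformation $z\mapsto\tfrac{dz-c}{a-bz}$ on $\mathbb{P}^1(\overline{\F_q})$. One checks that $\psi_A$ depends only on the class $[A]$, that $A\mapsto\psi_A$ is a homomorphism ($\psi_{AB}=\psi_A\circ\psi_B$), and that it is injective, since $\psi_A=\mathrm{id}$ forces $b=c=0$ and $a=d$. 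Consequently $[A]\circ f=f$ if and only if $\psi_A(R)=R$, so restricting $A\mapsto\psi_A$ to the stabilizer $\mathrm{Stab}(f)=\{[A]:[A]\circ f=f\}$ and then to the induced permutation of $R$ gives a group homomorphism $\Phi\colon\mathrm{Stab}(f)\to\mathrm{Sym}(R)$.

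Next comes the key structural observation: since $A$ has entries in $\F_q$, we have $\psi_A(z^q)=\psi_A(z)^q$, so each $\psi_A|_R$ commutes with the Frobenius, which acts on $R$ as the $n$-cycle $(\alpha,\alpha^q,\dots,\alpha^{q^{n-1}})$. The centralizer of an $n$-cycle in $\mathrm{Sym}(R)\cong S_n$ is the cyclic group of order $n$ it generates, so the image of $\Phi$ lies in a cyclic group of order $n$. Now suppose $n\ge 3$ and $\Phi([A])$ is trivial: then $\psi_A$ fixes the $\ge 3$ distinct points of $R$, hence $\psi_A=\mathrm{id}$, hence $[A]$ is trivial by injectivity. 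Thus for $n\ge 3$ the map $\Phi$ embeds $\mathrm{Stab}(f)$ into a cyclic group, so $\mathrm{Stab}(f)$ — and therefore any subgroup $G$ with $G\subseteq\mathrm{Stab}(f)$, i.e. any $G$ having $f$ as a $G$-invariant — is cyclic. Equivalently, a noncyclic $G$ has no invariant of degree $\ge 3$; since every $G$-invariant has degree $\ge 2$ by definition, every $G$-invariant has degree exactly $2$.

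The only step requiring genuine care is the dictionary in the first paragraph: one must get the correspondence $[A]\circ f=f\iff\psi_A(R)=R$ exactly right, keeping track of the transpose/inverse that appears when passing from the matrix $A$ to the transformation $\psi_A$ acting on roots, and verify that $\Phi$ is a bona fide group homomorphism rather than merely a map of sets. Once that is in place, the remaining ingredients — that the centralizer of an $n$-cycle in $S_n$ is cyclic of order $n$, and that a M\"obius transformation fixing three points is the identity — are elementary, and the theorem follows by contraposition.
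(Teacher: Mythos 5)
Your proof is correct, and it takes a genuinely different and more economical route than the paper. The paper first reduces to two-generator subgroups, uses the $F_{A,r}$ machinery (Lemmas~\ref{ST4.5} and~\ref{power}) to show that suitable generators $[B_1],[B_2]$ satisfy $[B_i]\circ\alpha=\alpha^{q^{r_i}}$ for a root $\alpha$ of $f$, deduces from Lemma~\ref{lem:aux-divisor} that they commute and satisfy $[B_1]^{D_1/D}=[B_2]^{D_2/D}$, and then finishes with a four-case analysis over the conjugacy types of $\PGL_2(\F_q)$ to show the resulting abelian group is cyclic. You instead observe directly that for $f$ of degree $n\ge 3$ the whole stabilizer $\mathrm{Stab}(f)$ embeds (via $[A]\mapsto\psi_A|_R$, injective because a M\"obius transformation fixing the $n\ge 3$ points of $R$ is the identity) into the centralizer in $\mathrm{Sym}(R)$ of the Frobenius $n$-cycle, which is cyclic of order $n$; cyclicity of $\mathrm{Stab}(f)$, hence of any $G$ admitting an invariant of degree $\ge 3$, is immediate. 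Your key dictionary ($[A]\circ f=f$ iff $\psi_A(R)=R$, with $\psi_A$ commuting with Frobenius) is essentially the content of the paper's Lemma~\ref{lem:aux-10}, so the ingredients are available in the paper; what your argument buys is the elimination of both the reduction to two generators and the type-by-type case analysis, and it yields the sharper structural conclusion that $\mathrm{Stab}(f)$ is cyclic of order dividing $\deg f$. What the paper's longer route buys is that the machinery it develops (reduced forms, types, the polynomials $F_{A,r}$) is reused wholesale in the proof of Theorem~\ref{thm:main-2}, so little of it is wasted. The one step you flagged as delicate — the transpose/inverse bookkeeping relating $A$ to $\psi_A(z)=\frac{dz-c}{a-bz}$ — does come out right: writing $f(y)=\prod_i(y-\alpha_i)$ gives $A\circ f=\prod_i\bigl((a-\alpha_i b)x+(c-\alpha_i d)\bigr)$, whose leading coefficient is nonzero because no $\alpha_i$ lies in $\F_q$, and whose roots are exactly the $\psi_A(\alpha_i)$, matching the paper's definition of $[A]\circ\alpha$.
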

\begin{theorem}\label{thm:main-2}
Let $[A]\in \PGL_2(\F_q)$ be an element of order $D=\ord([A])$. Then, for any integer $n>2$, the number $\n_{A}(n)$ of $[A]$-invariants of degree $n$ is \emph{zero} if $n$ is not divisible by $D$ and, for $n=Dm$ with $m\in \N$, the following holds:
\begin{equation}\label{eq:invariants-formula}\n_{A}(Dm)=\frac{\varphi(D)}{Dm}\left(c_{A}+\sum_{d|m\atop \gcd(d, D)=1}\mu(d)(q^{m/d}+\eta_{A}(m/d))\right),\end{equation}
where $\varphi$ is the Euler Phi function, $\mu$ is the Mobius function, $\eta_{A}:\N\to \N$ and $c_{A}\in \Z$ are given as follows
\begin{enumerate}
\item $c_{A}=0$ and $\eta_{A}\equiv \varepsilon$, where $\varepsilon=-1$ or $0$, according to whether $A$ has distinct or equal eigenvalues in $\F_q$, respectively; 
\item $c_{A}=-1$ and $\eta_{A}$ is the zero function if $A$ has symmetric eigenvalues in $\F_{q^2}\setminus \F_q$;
\item $c_{A}=0$ and $\eta_{A}(t)=(-1)^{t+1}$ if $A$ has non symmetric eigenvalues in $\F_{q^2}\setminus \F_q$.
\end{enumerate}
\end{theorem}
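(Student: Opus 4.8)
The plan is to count $[A]$-invariants of degree $n$ by connecting them with orbits of points in $\mathbb P^1(\F_{q^n})$ under the natural action of $[A]$. The starting observation is that for $f\in \I_n$ with root $\alpha\in \F_{q^n}$, the condition $[A]\circ f=f$ should translate into the statement that the Möbius transformation $\tau_A\colon z\mapsto \frac{az+c}{bz+d}$ permutes the conjugates $\alpha,\alpha^q,\dots,\alpha^{q^{n-1}}$ of $\alpha$ over $\F_q$, hence $\tau_A(\alpha)=\alpha^{q^j}$ for some $j$, i.e. $\tau_A$ and Frobenius ``interact''. I would first make this precise: $f$ is $[A]$-invariant if and only if $\tau_A(\alpha)$ is a conjugate of $\alpha$, and then a counting argument on the cyclic group generated by $\tau_A$ acting on the $n$ conjugates forces $D=\ord([A])$ to divide $n$ (giving the vanishing of $\n_A(n)$ when $D\nmid n$) and, writing $n=Dm$, identifies $[A]$-invariants of degree $Dm$ with certain $\tau_A$-orbits of size $D$ inside $\mathbb P^1(\F_{q^{Dm}})$, or more precisely with Frobenius-orbits in the quotient of $\mathbb P^1$ by $\an{\tau_A}$.

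The cleaner route is to diagonalize. Over $\overline{\F_q}$ (really over $\F_q$ or $\F_{q^2}$ depending on the eigenvalues), conjugate $A$ to a nice form: a diagonal matrix $\mathrm{diag}(\lambda,\mu)$ with $\lambda/\mu$ of order $D$ when the eigenvalues are distinct and lie in the appropriate field, or a companion-type matrix when eigenvalues coincide. The Möbius action is equivariant under $\GL_2$-conjugation up to a change of variable on $\F_q[x]$, so counting $[A]$-invariants reduces to counting invariants for the normal form, but now the change of variable may only be defined over $\F_{q^2}$, which is exactly what produces the three cases and the functions $\eta_A$ and the constant $c_A$. In the split diagonal case $z\mapsto \zeta z$ with $\zeta$ of order $D$, an $[A]$-invariant of degree $Dm$ corresponds to a Frobenius orbit of length $Dm$ among the $q^{Dm}+1$ points of $\mathbb P^1(\F_{q^{Dm}})$ that is stable and free under multiplication by $\zeta$; a direct inclusion-exclusion (Möbius over divisors $d\mid m$ coprime to $D$, because one must avoid points defined over smaller subfields where the orbit would be shorter) gives $\frac{\varphi(D)}{Dm}\sum_{d\mid m,\ \gcd(d,D)=1}\mu(d)(q^{m/d}\pm\text{correction})$, with the $\varphi(D)$ reflecting the choice of primitive power of $\zeta$ realizing a given fixed polynomial and the corrections $0,-1,+(-1)^{t+1}$ tracking how the two fixed points $0,\infty$ of the transformation behave: whether they are individually rational, swapped, or form a conjugate pair with extra sign subtleties.

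Concretely, the key steps in order are: (1) establish the dictionary between $[A]$-invariance of $f\in\I_n$ and the orbit structure of $\langle\tau_A\rangle$ on the conjugacy class of a root, and deduce $D\mid n$; (2) classify $A$ up to $\GL_2(\F_q)$-conjugacy into the relevant types (distinct eigenvalues in $\F_q$; repeated eigenvalue; eigenvalues in $\F_{q^2}\setminus\F_q$, further split by whether $\tau_A$ as an element of $\PGL_2$ fixes a $\F_q$-rational pair of points "symmetrically" — i.e. the order-$D$ element is conjugate to $z\mapsto\zeta z$ versus a "non-split torus" rotation), and reduce the count for general $A$ to the normal form via the equivariance of the action; (3) for each normal form, count the relevant Frobenius-stable free $\langle\tau_A\rangle$-orbits in $\mathbb P^1(\F_{q^{Dm}})$ by a $q^{m/d}$ main term plus a boundary correction from the fixed points of $\tau_A$, apply Möbius inversion over $d\mid m$ with $\gcd(d,D)=1$, and divide by $Dm/\varphi(D)$ to pass from orbits to polynomials; (4) read off $c_A$ and $\eta_A$ in each case and check they match the three listed alternatives, in particular verifying the sign $(-1)^{t+1}$ in the non-symmetric $\F_{q^2}$ case by carefully tracking the parity of the subfield degree on which the conjugate pair of fixed points becomes visible. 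I expect the main obstacle to be step (2)–(3) in the non-split cases: correctly handling the descent from $\F_{q^2}$ to $\F_q$ — i.e. showing the $\F_{q^2}$-count descends to the right $\F_q$-count — and pinning down the fixed-point corrections $c_A$ and $\eta_A$ with the correct signs, since this is where the three cases genuinely diverge and where an off-by-a-sign or off-by-one error is easiest to make; the rest is a fairly standard Möbius/orbit-counting computation once the dictionary and the normal forms are in place.
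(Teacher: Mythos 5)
Your overall strategy is sound and is essentially a geometric rephrasing of what the paper does: reduce to conjugacy normal forms (Theorem~\ref{thm:types}), translate $[A]$-invariance of a degree-$Dm$ polynomial into the condition $\tau_{A^j}(\alpha)=\alpha^{q^{m}}$ on a root $\alpha$ for one of the $\varphi(D)$ residues $j$ coprime to $D$ (Lemmas~\ref{ST4.5}, \ref{power} and~\ref{lem:type-4-enum-1}, phrased there via the polynomials $F_{A^j,m}(x)=b_jx^{q^m+1}-a_jx^{q^m}+d_jx-c_j$, whose roots are exactly these $\alpha$), and then apply M\"obius inversion over divisors $d\mid m$ coprime to $D$. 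Up to that point the two arguments coincide, and your derivation of $D\mid n$ is correct.

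The genuine gap is that the entire new content of the theorem --- the values of $c_A$ and $\eta_A$ distinguishing the three cases --- is precisely the part you defer. The main term $\frac{\varphi(D)}{Dm}\sum_d\mu(d)q^{m/d}$ was already known asymptotically (\cite{ST12}) and was even asserted as an exact formula in \cite{D62}, incorrectly; what must actually be proved is the count of solutions of $\tau_{A^j}(\beta)=\beta^{q^{m/d}}$ with $\beta$ of degree at most $2$ over $\F_q$, for \emph{every} $j$ coprime to $D$, and these are not simply ``the fixed points of $\tau_A$'': for $m/d$ odd the condition on a quadratic point reads $\tau_{A^j}(\beta)=\beta^{q}$, not $\tau_{A^j}(\beta)=\beta$, which is why the correction depends on the parity of $m/d$ and not only on $A$. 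In the hardest case (non-symmetric eigenvalues in $\F_{q^2}\setminus\F_q$) the paper must compute the entries of $A^j$ explicitly by diagonalizing over $\F_{q^2}$ (Proposition~\ref{prop:powers-A}) to show that $F_{A^j,m}$ has degree exactly $q^m+1$, no roots in $\F_q$, and a single conjugate quadratic pair of roots precisely when $m$ is even, uniformly in $j$ (Lemma~\ref{lem:type-4-enum-2}); this is the source of $\eta_A(t)=(-1)^{t+1}$ and does not follow from a generic ``boundary correction from the fixed points'' heuristic. Likewise, in the symmetric case the constant $c_A=-1$ sits \emph{outside} the M\"obius sum, which your template $\frac{\varphi(D)}{Dm}\sum_d\mu(d)(q^{m/d}+\mathrm{corr})$ cannot produce without a separate argument (the paper obtains it from Corollary~7 of \cite{MP17}, and handles types $1$ and $2$ via \cite{Gar11}). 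Until these degree-$\le 2$ counts are carried out case by case, your argument establishes only the already-known shape of the formula, not the stated exact values.
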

We remark that some results in the direction of Theorems~\ref{thm:main-1} and~\ref{thm:main-2} were previously obtained. In~\cite{ST12} and~\cite{R17}, Theorem~\ref{thm:main-1} is proved for the cases that $G=\PGL_2(\F_q)$ and $G$ is a $p$-group, respectively. In addition, Theorem 5.3 of~\cite{ST12} entails that, if $[A]\in \PGL_2(\F_q)$ has order $D$ and $n>2$, the number $\n_{A}(n)$ of $[A]$-invariants of degree $n$ equals zero if $n$ is not divisible by $D$ and, for $n=Dm$ with $m\in \N$,
$$\n_{A}(n)\approx \frac{\varphi(D)}{Dm}q^m.$$
Here, $a_m\approx b_m$ means $\lim\limits_{m\to \infty}\frac{a_m}{b_m}=1$. We observe that this asymptotic formula agrees with Theorem~\ref{thm:main-2}.

The structure of the paper is given as follows. In Section 2, we provide all the machinery that is used in the proof of our main results. Section 3 is devoted to prove Theorem~\ref{thm:main-1} and, in Section 4, we prove Theorem~\ref{thm:main-2}.

\section{Preliminaries}
In this section, we provide background material that is frequently used throughout the paper.

\subsection{Auxiliary lemmas}\label{subsec:lemma-aux}
We present, without proof, some auxiliary results from~\cite{ST12}. We use slightly different notations and, for more details, see Sections 4 and 5 of \cite{ST12}.

\begin{lemma}\label{lem:aux-mobius-action}
For any $A, B\in \GL_2(\F_q)$ and $f\in \I_k$ with $k\ge 2$, the following hold.
\begin{enumerate}[(i)]
\item $[A]\circ f$ is in $\I_k$,
\item $[A]\circ ([B]\circ f)=[AB]\circ f$,
\item if $[I]$ is the identity of $\PGL_2(\F_q)$, $[I]\circ f=f$.
\end{enumerate}
\end{lemma}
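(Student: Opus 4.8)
The plan is to establish the three statements by direct computation, carried out in the order~(iii),~(i),~(ii), since the argument for~(ii) will invoke the degree and irreducibility conclusions of~(i). Throughout write $A=\left(\begin{matrix}a&b\\c&d\end{matrix}\right)$ and $B=\left(\begin{matrix}a'&b'\\c'&d'\end{matrix}\right)$. Statement~(iii) is immediate from the definition: taking $A=I$, with $a=d=1$ and $b=c=0$, gives $I\circ f=1^{k}\,f(x)=f$, and since $f$ is monic no rescaling is needed, so $[I]\circ f=f$.

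For~(i) I would work over $\overline{\F_q}$. Factoring $f(x)=\prod_{i=1}^{k}(x-\alpha_i)$, where the $\alpha_i=\alpha^{q^{i-1}}$ are the Frobenius conjugates of a root $\alpha$ of $f$, a direct substitution and clearing of denominators gives
$$A\circ f=\prod_{i=1}^{k}\big((a-b\alpha_i)x+(c-d\alpha_i)\big).$$
Each factor is genuinely linear: if $a-b\alpha_i=0$ then either $b\ne 0$ and $\alpha_i=a/b\in \F_q$, which is impossible since $f$ is irreducible of degree $k\ge 2$ and hence has no root in $\F_q$, or else $b=0$, in which case $a\ne 0$ because $\det A\ne 0$. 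Thus $\deg(A\circ f)=k$, and the root of the $i$-th factor is $\beta_i=\frac{d\alpha_i-c}{a-b\alpha_i}$. The key observation is that the map $t\mapsto \frac{dt-c}{a-bt}$ has coefficients in $\F_q$, hence commutes with the Frobenius $t\mapsto t^{q}$, so that $\beta_i=\beta_1^{q^{i-1}}$. Since the $\alpha_i$ are distinct and this map is injective, the $\beta_i$ are $k$ distinct elements forming a single Frobenius orbit, whence $\beta_1$ has degree $k$ over $\F_q$ and $A\circ f$ is, up to a scalar, its minimal polynomial. Consequently $[A]\circ f$ is the monic minimal polynomial of $\beta_1$ and lies in $\I_k$.

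For~(ii) I would first prove the sharper identity $A\circ(B\circ f)=(AB)\circ f$ at the level of $\GL_2$. Substituting $y=\frac{ax+c}{bx+d}$ into the expression $(b'y+d')^{k}f\left(\frac{a'y+c'}{b'y+d'}\right)$ defining $B\circ f$, and multiplying by $(bx+d)^{k}$, the powers of $bx+d$ cancel while the inner argument collapses to the fractional linear map attached to the matrix product $AB$ and the surviving prefactor becomes the corresponding $k$-th power; the result is precisely $(AB)\circ f$. (Here~(i) is used to guarantee $\deg(B\circ f)=k$, which justifies the exponent $k$ in the formula for $A\circ(B\circ f)$.) Passing to $\PGL_2$ is then routine: since $A\circ(\lambda g)=\lambda\,(A\circ g)$ for $\lambda\in \F_q^{*}$, the element $[A]\circ([B]\circ f)$ is a nonzero scalar multiple of $(AB)\circ f$, and being monic it must coincide with the monic normalization $[AB]\circ f$. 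Together with $[A][B]=[AB]$ in $\PGL_2$, this yields~(ii).

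I expect the main obstacle to be the irreducibility half of~(i): the factorization and the degree count are mechanical, but the substance is the recognition that the roots $\beta_i$ constitute one full Frobenius orbit of size $k$, which is exactly what forces $A\circ f$ to be irreducible. This step rests squarely on the hypothesis $k\ge 2$ (so that $f$ has no root in $\F_q$ and the fractional map $t\mapsto \frac{dt-c}{a-bt}$ is defined at every $\alpha_i$) and on the $\F_q$-rationality of that map.
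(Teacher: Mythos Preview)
Your argument is correct and self-contained. Note that the paper does not actually prove this lemma: it is quoted without proof from~\cite{ST12} (see the opening line of Subsection~\ref{subsec:lemma-aux}), so there is no ``paper's proof'' to compare against. What you have written is essentially the standard verification---factor $f$ over $\overline{\F_q}$, read off the roots $\beta_i=\frac{d\alpha_i-c}{a-b\alpha_i}$ of $A\circ f$, observe that the $\F_q$-rational M\"obius map commutes with Frobenius so the $\beta_i$ form a single Frobenius orbit of length $k$, and check the cocycle identity $A\circ(B\circ f)=(AB)\circ f$ by direct substitution---and it recovers exactly the content the paper imports from~\cite{ST12}.
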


\begin{define}\label{def:F_Ar}
For $A = \left( \begin{array}{cc}a&b\\ c&d\end{array}\right) \in\GL_2(\F_q)$ and $r$ a non-negative integer,
\[
F_{A,r}(x) := bx^{q^r+1}-ax^{q^r}+dx-c.
\]
\end{define}
From Theorem~4.5 of~\cite{ST12}, we have the following result.
\begin{lemma}\label{ST4.5}
Let $f$ be an irreducible polynomial of degree $Dm\ge 3$ such that $[A]\circ f=f$, where $D$ is the order of $[A]$. The following hold:
\begin{enumerate}[(i)]
\item there is a unique positive integer $\ell\le D-1$ such that $\gcd(\ell, D)=1$ and $f$ divides $F_{A, s}(x)$, where $s=\ell\cdot \frac{Dm}{D}=\ell\cdot m$,
\item for any $r\ge 1$, the irreducible factors of $F_{A, r}$ are of degree $Dr$, of degree $Dk$ with $k<r$, $r=km$ and $\gcd(m, D)=1$ and of degree at most $2$.
\end{enumerate}
\end{lemma}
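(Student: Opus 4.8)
\medskip
\noindent The plan is to translate everything into the induced action on roots. Writing $A=\left(\begin{matrix}a&b\\c&d\end{matrix}\right)$, a direct computation shows that if $f$ is monic of degree $k\ge 2$ with root set $R\subseteq\overline{\F_q}$, then $[A]\circ f$ has root set $\nu(R)$, where $\nu(\gamma)=\frac{d\gamma-c}{-b\gamma+a}$ is the M\"obius map attached to $\mathrm{adj}(A)$; since $[\mathrm{adj}(A)]=[A]^{-1}$ in $\PGL_2(\F_q)$, the transformation $\nu$ has order $D$, and, having coefficients in $\F_q$, it commutes with the $q$-power Frobenius $\phi$. The second ingredient is the equivalence $F_{A,r}(\beta)=0\iff \nu(\beta)=\beta^{q^r}$, obtained by clearing the denominator; I would make this precise by first noting that $F_{A,r}$ is separable (its derivative is $bx^{q^r}+d$, and a common root of $F_{A,r}$ and $F_{A,r}'$ forces $\det A=0$) and that a root $\beta$ of $F_{A,r}$ is never a pole of $\nu$ (again using $\det A\neq 0$). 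With this dictionary, counting irreducible factors of $F_{A,r}$ of degree $t$ amounts to counting elements of degree $t$ satisfying $\nu(\beta)=\beta^{q^r}$.

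For part (i), let $\alpha$ be a root of $f$, so $\F_q(\alpha)=\F_{q^{Dm}}$ and $R=\{\phi^i(\alpha):0\le i\le Dm-1\}$. Since $[A]\circ f=f$, the map $\nu$ permutes $R$, and because $\nu\phi=\phi\nu$ it acts on $R$ by shifting the exponent: $\nu(\alpha)=\alpha^{q^s}$ for a unique $0\le s\le Dm-1$, and then $\nu^j(\alpha)=\alpha^{q^{js}}$. From $\nu^D=\mathrm{id}$ we get $\alpha^{q^{Ds}}=\alpha$, hence $Dm\mid Ds$, i.e.\ $m\mid s$; write $s=\ell m$. If $s=0$, or more generally if $g:=\gcd(\ell,D)>1$, then $\nu^{D/g}$ fixes $\alpha$ together with its $Dm\ge 3$ conjugates, so $\nu^{D/g}=\mathrm{id}$ (a nontrivial element of $\PGL_2(\F_q)$ has at most two fixed points on $\mathbb{P}^1(\overline{\F_q})$), contradicting $\ord(\nu)=D$. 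Thus $1\le\ell\le D-1$ and $\gcd(\ell,D)=1$, and $\nu(\alpha)=\alpha^{q^{\ell m}}$ rearranges to $F_{A,\ell m}(\alpha)=0$, so $f\mid F_{A,\ell m}$. Uniqueness is immediate: if $f\mid F_{A,\ell'm}$ with $1\le\ell'\le D-1$ then $\alpha^{q^{\ell'm}}=\nu(\alpha)=\alpha^{q^{\ell m}}$ forces $Dm\mid(\ell-\ell')m$, whence $\ell=\ell'$.

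For part (ii), let $\beta$ be a root of $F_{A,r}$ and $t=[\F_q(\beta):\F_q]$; I may assume $t\ge 3$, the remaining cases falling under ``degree at most $2$''. As before $\nu(\beta)=\phi^r(\beta)$ and $\nu\phi=\phi\nu$ give $\nu^j(\beta)=\phi^{jr}(\beta)$ for all $j\ge 0$; in particular $\phi^{Dr}(\beta)=\nu^D(\beta)=\beta$, so $t\mid Dr$. Let $j_0$ be the least positive integer with $\nu^{j_0}(\beta)=\beta$. The set of such $j$ is a subgroup of $\Z$ containing $D$, so $j_0\mid D$; on the other hand $\nu^{j_0}$ fixes $\beta$ and its $\ge 3$ conjugates, hence $\nu^{j_0}=\mathrm{id}$ and $D\mid j_0$; therefore $j_0=D$. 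But $\nu^j(\beta)=\phi^{jr}(\beta)=\beta$ holds iff $t\mid jr$ iff $\tfrac{t}{\gcd(t,r)}\mid j$, so $j_0=\tfrac{t}{\gcd(t,r)}$. Hence $t=D\cdot\gcd(t,r)$; setting $k=\gcd(t,r)$ we get $t=Dk$ with $k\mid r$, and the relation $\gcd(Dk,r)=k$ unwinds to $\gcd(D,r/k)=1$. So either $k=r$ (degree $Dr$) or $k<r$ with $r=km$ and $\gcd(m,D)=1$ (degree $Dk$), exactly as claimed.

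The divisibility bookkeeping (the passage $m\mid s$, the $\gcd$ conditions, the uniqueness of $\ell$) is routine once the framework is set up; the points requiring real care are in the first paragraph — the exact root dictionary, in particular separability of $F_{A,r}$ and the ``never a pole'' remark — together with the repeated, decisive appeal to the fact that a nontrivial element of $\PGL_2(\F_q)$ has at most two fixed points on $\mathbb{P}^1(\overline{\F_q})$, which is precisely what makes the degree-$\ge 3$ analysis collapse.
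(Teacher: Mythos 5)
Your proof is correct; note that the paper itself states this lemma without proof, importing it from Theorem~4.5 and Section~5 of the cited reference \cite{ST12}, and your argument (passing to the induced M\"obius action $\nu$ on roots, using that $\nu$ commutes with Frobenius and that a nontrivial element of $\PGL_2$ fixes at most two points of $\mathbb{P}^1(\overline{\F}_q)$) is essentially the same mechanism used there. The only quibble is cosmetic: whether $\nu$ is attached to $\mathrm{adj}(A)$ or its transpose depends on the row/column convention for M\"obius maps, but either way its order is $D$, which is all you use.
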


\begin{lemma}[see \cite{ST12}, item (a) of Lemma 5.1]\label{power}
Let $r\ge 1$ and let $k$ be a divisor of $r$ such that $m:=r/k$ is relatively prime with $D$, the order of $[A]$. For $j$ such that $jm\equiv 1\pmod D$, the irreducible factors of $F_{A, r}(x)$ of degree $Dk$ are exactly the irreducible factors of $F_{A^j, k}(x)$ of degree $Dk$.
\end{lemma}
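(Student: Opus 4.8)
My plan is to translate both divisibility statements into a functional equation satisfied by a root, and then compare these equations through a short iteration. For $M\in\GL_2(\F_q)$ with entries $a,b,c,d$ arranged as in Definition~\ref{def:F_Ar}, write $F_{M,r}$ for the polynomial of that definition and let $\theta_M$ denote the fractional linear transformation $z\mapsto(dz-c)/(a-bz)$. A one-line manipulation of $F_{M,r}(x)=bx^{q^r+1}-ax^{q^r}+dx-c$ shows that, whenever $\alpha$ is a root of a monic irreducible polynomial of degree at least two, one has $F_{M,r}(\alpha)=0$ if and only if $\alpha^{q^r}=\theta_M(\alpha)$ (the pole of $\theta_M$, when $b\neq0$, equals $a/b\in\F_q$, hence is neither a root of $F_{M,r}$ nor equal to $\alpha$, and passing to $\mathbb{P}^1(\overline{\F_q})$ disposes of the point at infinity). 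Alongside this I would record three elementary facts about the maps $\theta_M$: the composition law for fractional linear maps gives $\theta_M\circ\theta_N=\theta_{MN}$, and in particular $\theta_{A^j}=\theta_A^j$; a scalar matrix induces the identity transformation, so $\theta_A^D=\theta_{A^D}=\mathrm{id}$ since $A^D$ is scalar by definition of $D=\ord([A])$; and, because $\theta_M$ has coefficients in $\F_q$, it commutes with the Frobenius $\phi\colon z\mapsto z^q$.

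Next I would fix a monic irreducible $f$ of degree $Dk$ (we may assume $Dk\ge2$) and a root $\alpha$ of $f$, and note that since $f$ is the minimal polynomial of $\alpha$, we have $f\mid F_{A,r}$ exactly when $\phi^{km}(\alpha)=\theta_A(\alpha)$ (recall $r=km$), and $f\mid F_{A^j,k}$ exactly when $\phi^{k}(\alpha)=\theta_{A^j}(\alpha)=\theta_A^j(\alpha)$. Thus the lemma reduces to the equivalence of these two conditions on $\alpha$. For the implication from the $A^j$-condition to the $A$-condition I would simply apply $\phi^k$ repeatedly, commuting it past $\theta_A$ each time: an immediate induction gives $\phi^{km}(\alpha)=\theta_A^{jm}(\alpha)$, and since $jm\equiv1\pmod D$ and $\theta_A^D=\mathrm{id}$ this is $\theta_A(\alpha)$, as wanted.

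For the converse I would set $\alpha_i:=\phi^{ki}(\alpha)$ for $i\in\Z$. Since $\deg f=Dk$ we have $\alpha\in\F_{q^{Dk}}$, so $\phi^{kD}$ fixes $\alpha$ and the sequence $(\alpha_i)$ is periodic with period dividing $D$. Commuting $\phi^{ki}$ past $\theta_A$ in the hypothesis $\phi^{km}(\alpha)=\theta_A(\alpha)$ yields $\alpha_{i+m}=\theta_A(\alpha_i)$ for every $i$, and iterating this relation gives $\alpha_{tm}=\theta_A^t(\alpha)$ for all $t\ge0$. Taking $t=j$ and invoking both $jm\equiv1\pmod D$ and the $D$-periodicity of $(\alpha_i)$ then forces $\alpha_1=\theta_A^j(\alpha)$, that is, $\phi^k(\alpha)=\theta_A^j(\alpha)$, which is precisely the $A^j$-condition. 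This closes the equivalence and proves the lemma.

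The routine parts are the opening identity $F_{M,r}(\alpha)=0\iff\alpha^{q^r}=\theta_M(\alpha)$ and the verifications that $\theta_{A^j}=\theta_A^j$ and $\theta_A^D=\mathrm{id}$. The step I expect to require care is the converse: it is essential that $\deg f$ be \emph{exactly} $Dk$ — neither a proper multiple nor a proper divisor — because that is what forces the period of $(\alpha_i)$ to divide $D$, and only then can the coprimality of $m$ with $D$ transform a relation that holds ``$m$ steps apart'' into the one that holds ``$1$ step apart''. Getting this index bookkeeping modulo $D$ right is the crux.
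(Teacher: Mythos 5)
Your proof is correct, but note that the paper offers no proof of this statement to compare against: the lemma sits in Subsection~\ref{subsec:lemma-aux}, which explicitly presents results from \cite{ST12} ``without proof,'' so it is simply imported as item (a) of Lemma 5.1 of that reference. Your self-contained argument is nonetheless sound, and it is built from exactly the ingredients the paper does import elsewhere: your opening equivalence $F_{M,r}(\alpha)=0\iff\alpha^{q^r}=\theta_M(\alpha)$ for $\alpha$ of degree at least two over $\F_q$ is Lemma~\ref{lem:aux-10}(i), and the composition law $\theta_M\circ\theta_N=\theta_{MN}$ is Lemma~\ref{lem:aux-10}(ii). Both directions check out. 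The forward direction needs only $\theta_A^{jm}=\theta_A$, which holds because $A^{jm}=A\cdot(A^D)^t$ with $A^D$ scalar; the induction commuting $\phi^k$ past $\theta_A$ is valid since $\theta_A$ has $\F_q$-coefficients. The converse correctly uses $\deg f=Dk$ to get $\phi^{Dk}(\alpha)=\alpha$, hence $D$-periodicity of $\alpha_i=\phi^{ki}(\alpha)$, after which $jm\equiv 1\pmod D$ turns $\alpha_{jm}=\theta_A^{j}(\alpha)$ into $\alpha_1=\theta_A^{j}(\alpha)$, i.e.\ $f\mid F_{A^j,k}$. One small correction to your closing commentary: the converse only requires the period of $(\alpha_i)$ to divide $D$, that is, $\deg f\mid Dk$, so a proper divisor of $Dk$ would not actually break the argument; what matters is simply that the statement restricts attention to factors of degree exactly $Dk$, which also guarantees $Dk\ge 2$ whenever $[A]\ne[I]$ and so keeps $\alpha$ away from the $\F_q$-rational poles of the maps $\theta_{A^i}$.
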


\subsection{Invariants through conjugacy classes}
We establish some interesting relations between the polynomials that are invariant by two conjugated elements. We start with the following result.

\begin{lemma}\label{lem:conj-inv}
Let $A, B, P\in \GL_2(\F_q)$ such that $[B]=[P]\cdot [A]\cdot [P]^{-1}$. For any $k\ge 2$ and any $f\in \I_k$, we have that $[B]\circ f=f$ if and only if $[A]\circ g=g$, where $g=[P]^{-1}\circ f\in \I_k$. 
\end{lemma}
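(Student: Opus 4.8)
The plan is to verify the claim by a direct computation using the group-action properties recorded in Lemma~\ref{lem:aux-mobius-action}, treating the passage $f\mapsto [P]^{-1}\circ f$ as a bijection of $\I_k$ that conjugates the $[B]$-action into the $[A]$-action. First I would fix the notation $g=[P]^{-1}\circ f$ and observe, via item (i) of Lemma~\ref{lem:aux-mobius-action}, that $g\in \I_k$, so that the statement makes sense; moreover the map $f\mapsto [P]^{-1}\circ f$ has inverse $g\mapsto [P]\circ g$ by items (ii) and (iii), hence it is a bijection on $\I_k$.

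Next I would do the main algebraic manipulation. Assume $[B]\circ f=f$. Since $[B]=[P][A][P]^{-1}$, item (ii) of Lemma~\ref{lem:aux-mobius-action} gives
\[
[P]\circ\bigl([A]\circ([P]^{-1}\circ f)\bigr)=[P][A][P]^{-1}\circ f=[B]\circ f=f.
\]
Applying $[P]^{-1}\circ(\,\cdot\,)$ to both sides and using items (ii) and (iii) to collapse $[P]^{-1}[P]=[I]$, we obtain $[A]\circ([P]^{-1}\circ f)=[P]^{-1}\circ f$, i.e.\ $[A]\circ g=g$. Conversely, if $[A]\circ g=g$ with $g=[P]^{-1}\circ f$, then $f=[P]\circ g$ (by the bijectivity noted above), and
\[
[B]\circ f=[P][A][P]^{-1}\circ([P]\circ g)=[P][A]\circ g=[P]\circ([A]\circ g)=[P]\circ g=f,
\]
again using item (ii) repeatedly and item (iii) to cancel $[P]^{-1}[P]$. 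This establishes the equivalence.

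The only genuine subtlety — and the step I would be most careful about — is that Definition~\ref{def:mobius-action} defines $[A]\circ f$ only for a chosen representative $A\in \GL_2(\F_q)$, so one must make sure the computation is genuinely well-defined at the level of $\PGL_2(\F_q)$ and that the relation $[B]=[P][A][P]^{-1}$ may be lifted compatibly; this is exactly what item (ii) of Lemma~\ref{lem:aux-mobius-action} guarantees, since it asserts $[A]\circ([B]\circ f)=[AB]\circ f$ as an identity of monic irreducible polynomials, independent of representatives. Everything else is a routine chain of substitutions, so I would keep the write-up short and simply display the two implications as above.
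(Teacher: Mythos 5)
Your proof is correct and follows essentially the same route as the paper: both arguments use items (ii) and (iii) of Lemma~\ref{lem:aux-mobius-action} to rewrite $[B]\circ f$ as $[P]\circ\bigl([A]\circ([P]^{-1}\circ f)\bigr)$ and then cancel $[P]$, yielding the chain of equivalences. Your additional remarks on bijectivity and well-definedness are fine but not needed beyond what Lemma~\ref{lem:aux-mobius-action} already provides.
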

\begin{proof}
Observe that, from Lemma~\ref{lem:aux-mobius-action}, the following are equivalent:
\begin{enumerate}[$\bullet$]
\item $[B]\circ f=f,$
\item $[P]\circ ([A]\circ ([P]^{-1}\circ f))=f,$
\item $[A]\circ ([P]^{-1}\circ f)=[P]^{-1}\circ f$.
\end{enumerate}
\end{proof}
\begin{define}
For $A\in \GL_2(\F_q)$, $\C_A(n)$ is the set of $[A]$-invariants of degree $n$ and $\C_A=\cup_{n\ge 2}\C_A(n)$ is the set of $[A]$-invariants.
\end{define}

Since the compositions $[A]\circ f$ preserve degree and permute the set of monic irreducible polynomials of a given degree, we obtain the following result.
\begin{theorem}\label{thm:conjugates-main}
Let $A, B, P\in \GL_2(\F_q)$ such that $B=PAP^{-1}$. Let $\tau:\C_B\to \C_A$ be the map given by $\tau(f)=[P]^{-1}\circ f$. Then $\tau$ is a degree preserving one to one correspondence. Additionally, for any $n\ge 2$, the restriction of $\tau$ to the set $\C_A(n)$ is an one to one correspondence between $\C_A(n)$ and $\C_B(n)$. In particular, $\n_{A}(n)=\n_B(n)$.
\end{theorem}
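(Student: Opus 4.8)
The plan is to show that $\tau\colon\C_B\to\C_A$ given by $\tau(f)=[P]^{-1}\circ f$ is well-defined, degree-preserving, and bijective, the inverse being $g\mapsto[P]\circ g$. The whole argument rests on Lemmas~\ref{lem:aux-mobius-action} and~\ref{lem:conj-inv}, which have already done the substantive work; what remains is essentially bookkeeping.

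First I would check that $\tau$ lands in $\C_A$. Given $f\in\C_B$, say $f\in\C_B(n)$ for some $n\ge2$, part~(i) of Lemma~\ref{lem:aux-mobius-action} tells us $g:=[P]^{-1}\circ f\in\I_n$, so in particular $g$ is a monic irreducible polynomial of degree $n\ge2$. Since $B=PAP^{-1}$ we have $[B]=[P]\cdot[A]\cdot[P]^{-1}$, so Lemma~\ref{lem:conj-inv} applies directly: $[B]\circ f=f$ forces $[A]\circ g=g$, i.e. $g\in\C_A(n)$. Thus $\tau$ is well-defined and maps $\C_B(n)$ into $\C_A(n)$ for each $n\ge2$, hence is degree-preserving on all of $\C_B$.

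Next I would exhibit the inverse. By symmetry $A=P^{-1}BP$, so the same reasoning with the roles of $A,B$ swapped and $P$ replaced by $P^{-1}$ shows that $\sigma\colon\C_A\to\C_B$, $\sigma(g)=[P]\circ g$, is a well-defined degree-preserving map. It remains to verify $\sigma\circ\tau=\mathrm{id}_{\C_B}$ and $\tau\circ\sigma=\mathrm{id}_{\C_A}$: for $f\in\C_B$, parts~(ii) and~(iii) of Lemma~\ref{lem:aux-mobius-action} give $\sigma(\tau(f))=[P]\circ([P]^{-1}\circ f)=[PP^{-1}]\circ f=[I]\circ f=f$, and symmetrically $\tau(\sigma(g))=g$ for $g\in\C_A$. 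Hence $\tau$ is a bijection with inverse $\sigma$, and since both are degree-preserving, $\tau$ restricts to a bijection $\C_B(n)\to\C_A(n)$ for every $n\ge2$; counting elements yields $\n_A(n)=\n_B(n)$.

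There is no real obstacle here — the only point requiring the slightest care is making sure $[B]=[P][A][P]^{-1}$ genuinely follows from $B=PAP^{-1}$ in $\GL_2(\F_q)$ before passing to $\PGL_2(\F_q)$, and that the hypothesis $n\ge2$ (needed so that $\PGL_2(\F_q)$ acts on $\I_n$ in the first place) is carried through every invocation of the auxiliary lemmas. Everything else is a formal consequence of the group-action identities in Lemma~\ref{lem:aux-mobius-action}.
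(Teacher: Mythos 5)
Your proposal is correct and follows essentially the same route as the paper's own proof: both reduce everything to Lemma~\ref{lem:conj-inv} together with the group-action identities of Lemma~\ref{lem:aux-mobius-action}, with your version merely spelling out the inverse map $\sigma(g)=[P]\circ g$ explicitly. No gaps; you even silently correct the statement's slip of restricting $\tau$ to $\C_A(n)$ rather than $\C_B(n)$.
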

\begin{proof}
From Lemma~\ref{lem:conj-inv}, $\tau$ is well defined and is a one to one correspondence. Additionally, since the compositions $[A]\circ f$ preserve degree, the restriction of $\tau$ to the set $\C_A(n)$ is a one to one correspondence between $\C_A(n)$ and $\C_B(n)$ and, since these sets are finite, they have the same cardinality, i.e., $\n_{A}(n)=\n_B(n)$.
\end{proof}
We observe that, if $G$ is the cyclic group generated by $[A]\in \PGL_2(\F_q)$, $f$ is $[A]$-invariant if and only if is $G$-invariant. From the previous theorem, the following corollary is straightforward.

\begin{cor}\label{cor:conjugates-main}
Let $G, H\in \PGL_2(\F_q)$ be groups with the property that there exists $P\in \GL_2(\F_q)$ such that $G=[P]\cdot H\cdot [P]^{-1}=\{[P]\cdot [A]\cdot [P]^{-1}\,|\, [A]\in H\}$. Then there is a one to one correspondence between the $G$-invariants and the $H$-invariants that is degree preserving.
\end{cor}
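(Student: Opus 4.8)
The plan is to reduce the statement to Theorem~\ref{thm:conjugates-main} by unwinding the definition of $G$-invariance into a family of single-element invariance conditions. First I would fix $P\in\GL_2(\F_q)$ with $G=[P]\cdot H\cdot [P]^{-1}$ and consider the candidate map $\tau\colon f\mapsto [P]^{-1}\circ f$, together with its expected inverse $g\mapsto [P]\circ g$; by item (i) of Lemma~\ref{lem:aux-mobius-action} both operations send $\I_k$ to $\I_k$, and by items (ii) and (iii) they are mutually inverse bijections on each $\I_k$, hence degree preserving on the set of all monic irreducibles of degree at least two.

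The key step is to check that $\tau$ restricts to a bijection between the $G$-invariants and the $H$-invariants. A polynomial $f$ is $G$-invariant precisely when $[B]\circ f=f$ for every $[B]\in G$, and every such $[B]$ has the form $[P]\cdot[A]\cdot[P]^{-1}$ for a unique $[A]\in H$. Applying Lemma~\ref{lem:conj-inv} to each pair $([A],[B])$ with $B=PAP^{-1}$, the condition $[B]\circ f=f$ is equivalent to $[A]\circ([P]^{-1}\circ f)=[P]^{-1}\circ f$, i.e.\ to $[A]\circ\tau(f)=\tau(f)$. Quantifying over all $[A]\in H$ (equivalently, over all $[B]\in G$), we get that $f$ is $G$-invariant if and only if $\tau(f)$ is $H$-invariant. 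Thus $\tau$ maps $G$-invariants into $H$-invariants, and the same argument applied to $[P]\cdot(-)$ (using $H=[P]^{-1}\cdot G\cdot[P]$) shows the inverse map sends $H$-invariants into $G$-invariants, so $\tau$ is the desired degree-preserving bijection.

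I do not anticipate a genuine obstacle here: the result is essentially Theorem~\ref{thm:conjugates-main} packaged for subgroups, and the only thing that needs care is that $G$-invariance is a conjunction over all group elements, so the equivalence from Lemma~\ref{lem:conj-inv} must be applied uniformly and simultaneously to every element rather than to a single one. One minor point worth stating explicitly is that $\tau$ need not send $\C_A$ (invariants of a single generator) to $\C_B$ when $G$ and $H$ are noncyclic, so the statement is genuinely about the intersection of fixed-point sets; but since that intersection is exactly the set cut out by the equivalence above, no extra work is required.
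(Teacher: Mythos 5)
Your proof is correct and follows essentially the same route as the paper: the paper declares the corollary ``straightforward'' from Theorem~\ref{thm:conjugates-main}, and your argument simply makes that explicit by applying the conjugation equivalence of Lemma~\ref{lem:conj-inv} uniformly over all elements of $H$ (equivalently $G$) to see that $\tau(f)=[P]^{-1}\circ f$ carries the intersection of fixed-point sets for $G$ bijectively onto that for $H$. Nothing is missing.
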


The previous results entail that in order to count polynomials that are invariant by an element (or a subgroup) of $\PGL_2(\F_q)$, we only need to consider them up to conjugations. In this context, understanding the conjugacy classes of the elements in $\PGL_2(\F_q)$ is crucial. 

\begin{define}
For $A\in \GL_2(\F_q)$ such that $[A]\ne [I]$, $A$ is of {\bf type} $1$ (resp. $2$, $3$ or $4$) if its eigenvalues are distinct and in $\F_q$ (resp. equal and in $\F_q$, symmetric and in $\F_{q^2}\setminus \F_q$ or not symmetric and in $\F_{q^2}\setminus \F_q$). 
\end{define}

We observe that he types of $A$ and $\lambda\cdot A$ are the same for any $\lambda\in \F_q^*$. For this reason, we say that $[A]$ is of type $t$ if $A$ is of type $t$. The following theorem entails that any element $[A]\in \PGL_2(\F_q)$ is conjugated to a special element of type $t$, for some $1\le t\le 4$.

\begin{theorem}\label{thm:types}
Let $A\in \GL_2(\F_q)$ such that $[A]\ne [I]$ and let $a, b$ and $c$ be elements of $\F_q^*$. Let $A(a):=\left( \begin{array}{cc}a&0\\ 0&1\end{array}\right),\, \mathcal E:=\left( \begin{array}{cc}1&0\\ 1&1\end{array}\right),\, C(b):=\left( \begin{array}{cc}0&1\\ b&0\end{array}\right)$ and $D(c):=\left( \begin{array}{cc}0&1\\ c&1\end{array}\right)$. Then $[A]\in \PGL_2(\F_q)$ is conjugated to:
\begin{enumerate}[(i)]
\item $[A(a)]$ for some $a\in \F_q\setminus\{0, 1\}$ if and only if $A$ is of type $1$.
\item $[\mathcal E]$ if and only if $A$ is of type $2$.
\item $[C(b)]$ for some non square $b\in \F_q^*$ if and only if $A$ is of type $3$.
\item $[D(c)]$ for some $c\in \F_q$ such that $x^2-x-c\in \F_q[x]$ is irreducible if and only if $A$ is of type $4$.
\end{enumerate}
\end{theorem}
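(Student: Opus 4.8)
The plan is to prove Theorem~\ref{thm:types} by classifying conjugacy classes in $\PGL_2(\F_q)$ through rational canonical forms, matching each of the four types to one of the listed normal forms. The backbone of the argument is the standard fact that two matrices in $\GL_2(\F_q)$ are conjugate over $\F_q$ if and only if they have the same rational canonical form, together with the observation that conjugation in $\PGL_2(\F_q)$ is induced by conjugation in $\GL_2(\F_q)$ up to scalars, and that scalars do not change the \emph{projective} class of the rational canonical form. Since eigenvalues of $A$ are defined up to a common nonzero scalar when we pass to $[A]$, the ratio of the two eigenvalues (or the structure of a repeated eigenvalue) is the conjugacy invariant, and this is exactly what distinguishes the four types.

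I would organize the proof into four cases according to the type of $A$, and in each case verify both implications. For type~$1$, if $A$ has distinct eigenvalues $\lambda_1,\lambda_2\in \F_q^*$, then $A$ is diagonalizable over $\F_q$, hence conjugate to $\mathrm{diag}(\lambda_1,\lambda_2)$; dividing by $\lambda_2$ shows $[A]$ is conjugate to $[A(a)]$ with $a=\lambda_1/\lambda_2\in \F_q\setminus\{0,1\}$ (the value $1$ being excluded precisely because the eigenvalues are distinct, and $0$ because $A$ is invertible). Conversely $A(a)$ with $a\notin\{0,1\}$ manifestly has distinct nonzero eigenvalues in $\F_q$. For type~$2$, a repeated eigenvalue $\lambda$ with $[A]\ne[I]$ forces a nontrivial Jordan block, so $A$ is conjugate to $\lambda\cdot\left(\begin{smallmatrix}1&0\\1&1\end{smallmatrix}\right)$ (transpose of the usual Jordan block, matching $\mathcal E$), and passing to $\PGL_2$ kills the $\lambda$. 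For types~$3$ and~$4$, $A$ has irreducible characteristic polynomial $x^2 - (\Tr A)x + \det A$ over $\F_q$, so $A$ is conjugate to the companion matrix $\left(\begin{smallmatrix}0&1\\-\det A&\Tr A\end{smallmatrix}\right)$; the eigenvalues $\lambda,\lambda^q\in \F_{q^2}\setminus\F_q$ are symmetric (i.e. $\lambda^q=-\lambda$, equivalently $\Tr A=0$) exactly in type~$3$, giving the companion form $\left(\begin{smallmatrix}0&1\\-\det A&0\end{smallmatrix}\right)$ where $-\det A$ must be a non-square (else the polynomial $x^2+\det A$ would split), which is $C(b)$ with $b=-\det A$; in type~$4$ one has $\Tr A\ne 0$, and scaling $A$ by $(\Tr A)^{-1}$ normalizes the trace to $1$, yielding $D(c)$ with $c=-\det(A)/(\Tr A)^2$ and $x^2-x-c$ irreducible. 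I should also note that different parameter values within a type can give conjugate elements (e.g. $a$ and $a^{-1}$, or $b$ and $b\cdot(\text{square})$), but the theorem only claims existence of \emph{some} such parameter, so this subtlety need not be resolved here.

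The routine but slightly delicate points are: (a) checking that $-\det A$ being a non-square is equivalent to $x^2 + \det A$ being irreducible over $\F_q$ (clear since a non-split quadratic of this shape is irreducible, using that $\F_q^*$ has a unique index-two subgroup when $q$ is odd, with the characteristic-two case handled by the observation that type~$3$ is vacuous there because $\lambda^q=-\lambda=\lambda$ would force $\lambda\in\F_q$), and (b) verifying that the normalization by a scalar is legitimate, which is immediate since the remark preceding the theorem records that $A$ and $\lambda A$ have the same type and $[\lambda A]=[A]$.

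The main obstacle I anticipate is the bookkeeping in types~$3$ and~$4$: one must be careful that "symmetric eigenvalues'' is an invariant of $[A]$ rather than of $A$, i.e. that it is unaffected by the scalar ambiguity. Indeed if $\lambda, \lambda^q$ are the eigenvalues of $A$, those of $\mu A$ are $\mu\lambda, \mu\lambda^q$, and $(\mu\lambda)^q = -(\mu\lambda)$ iff $\mu^{q-1}\lambda^q = -\lambda$; since $\mu^{q-1}=1$ when $\mu\in\F_q^*$, symmetry is preserved, so the dichotomy between types~$3$ and~$4$ is well-defined on $\PGL_2(\F_q)$ — this is the crucial compatibility check that makes the whole classification coherent, and it must be stated explicitly before doing the case analysis.
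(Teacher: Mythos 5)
Your proposal is correct and follows essentially the same route as the paper: the paper's (one-line, details-omitted) proof rests on the fact that conjugacy in $\PGL_2(\F_q)$ is detected by the characteristic polynomial up to the rescaling $f(x)\mapsto\lambda^{-2}f(\lambda x)$, which is exactly the combination of rational canonical forms and scalar normalization that you carry out explicitly. Your case analysis, including the well-definedness of ``symmetric eigenvalues'' under scalar multiplication and the vacuity of type $3$ in characteristic $2$, correctly supplies the details the paper omits.
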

\begin{proof}
This results follows from the fact that two elements  $[A], [B]$ in $\GL_2(\F_q)$ are conjugated if and only if the characteristic polynomials $P_A(x)$ and $P_B(x)$ of $A$ and $B$ are equal up to a transformation of the form $f(x)\mapsto \lambda^{-2}f(\lambda x)$ for some $\lambda\in \F_q^*$. We omit the details.
\end{proof}

\begin{define}
An element $A\in \GL_2(\F_q)$ is in \textbf{reduced form} if it is equal to $A(a)$, $\mathcal E$, $C(b)$ or $D(c)$ for some suitable $a$, $b$ or $c$ in $\F_q$.
\end{define}

We finish this section giving a complete study on the order of the elements in $\PGL_2(\F_q)$, according to their type.  

\begin{lemma}\label{lem:order}
Let $[A]$ be an element of type $t$ and let $D$ be its order in $\PGL_2(\F_q)$. The following hold:
\begin{enumerate}[(i)]
\item for $t=1$, $D>1$ is a divisor of $q-1$,
\item for $t=2$, $D=p$,
\item for $t=3$, $D=2$,
\item for $t=4$, $D$ divides $q+1$ and $D>2$. 
\end{enumerate}
\end{lemma}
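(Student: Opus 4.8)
The plan is to reduce every case to the reduced forms supplied by Theorem~\ref{thm:types}, using that conjugate elements of $\PGL_2(\F_q)$ have the same order: if $[B]=[P][A][P]^{-1}$ then $[B]^k=[P][A]^k[P]^{-1}$, so $[A]^k=[I]$ precisely when $[B]^k=[I]$. Thus it suffices to determine the order of $[A(a)]$, $[\mathcal E]$, $[C(b)]$ and $[D(c)]$ for the admissible parameters of Theorem~\ref{thm:types}. Throughout I use the elementary fact that a matrix $M\in\GL_2(\F_q)$ represents the identity of $\PGL_2(\F_q)$ if and only if $M$ is a (nonzero) scalar matrix.

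For $t=1,2,3$ the statement follows from an explicit computation of powers. An easy induction gives $A(a)^k=A(a^k)$, a diagonal matrix that is scalar iff $a^k=1$; hence $\ord([A(a)])$ equals the multiplicative order of $a$ in $\F_q^*$, which divides $q-1$ and is $>1$ since $a\ne 1$. Next, $\mathcal E^k$ is the lower triangular matrix with $1$'s on the diagonal and $k\bmod p$ in the $(2,1)$ entry; this equals the identity matrix when $p\mid k$ and is otherwise not scalar, so $\ord([\mathcal E])=p$. Finally $C(b)^2=bI$ is scalar while $C(b)$ itself is not, so $\ord([C(b)])=2$.

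The remaining case $t=4$ is the one carrying the content. Here $D(c)$ has irreducible characteristic polynomial $x^2-x-c$ over $\F_q$, so the $\F_q$-subalgebra $\F_q[D(c)]\subseteq M_2(\F_q)$ is isomorphic to $\F_q[x]/(x^2-x-c)\cong\F_{q^2}$, with $D(c)$ mapping to a root $\lambda\in\F_{q^2}\setminus\F_q$ of $x^2-x-c$ and the scalar matrices mapping onto the subfield $\F_q$. Consequently $D(c)^k$ is scalar iff $\lambda^k\in\F_q$, i.e. iff $\lambda^{k(q-1)}=1$; so, writing $t:=\lambda^{q-1}\in\F_{q^2}^*$, we get $\ord([D(c)])=\ord(t)$. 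Since $t^{q+1}=\lambda^{q^2-1}=1$, this order divides $q+1$. To see it exceeds $2$, note that $t\ne 1$ (otherwise $\lambda\in\F_q$), and $t\ne -1$ because $t=-1$ would force the two eigenvalues $\lambda$ and $\lambda^q=-\lambda$ of $D(c)$ to be symmetric, contradicting $t=4$; as $-1$ is the unique element of order $2$ in the cyclic group $\F_{q^2}^*$ whenever such an element exists, it follows that $\ord(t)\notin\{1,2\}$. Hence $D=\ord([D(c)])$ divides $q+1$ and $D>2$.

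The main obstacle is precisely the type $4$ analysis: one must correctly translate the (non)symmetry of the eigenvalues into the condition $\lambda^{q-1}=-1$, identify when a power of $D(c)$ becomes scalar via the embedding $\F_q[D(c)]\cong\F_{q^2}$, and invoke the cyclicity of $\F_{q^2}^*$. The other three types are immediate once the reduced forms of Theorem~\ref{thm:types} are in hand.
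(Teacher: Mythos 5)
Your proof is correct and follows essentially the same route as the paper: reduce to the reduced forms of Theorem~\ref{thm:types} via conjugacy, dispatch types $1$--$3$ by direct computation of powers, and for type $4$ characterize when a power becomes scalar by the condition $\lambda^{(q-1)k}=1$, deducing $D\mid q+1$ and ruling out $D\le 2$ because $\lambda^{q-1}\in\{1,-1\}$ would force $\lambda\in\F_q$ or symmetric eigenvalues. The only cosmetic difference is that you phrase the type-$4$ step through the embedding $\F_q[D(c)]\cong\F_{q^2}$ rather than through diagonalization over $\F_{q^2}$, which amounts to the same computation.
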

\begin{proof}
Since any two conjugated elements in $\PGL_2(\F_q)$ have the same order, from Theorem~\ref{thm:types}, we can suppose that $A$ is in the reduced form. From this fact, items (i), (ii) and (iii) are straightforward. For item (iv), let $\alpha, \alpha^q$ be the eigenvalues of $A$. We that $[A]^D=[A^D]$ and then $[A]^D=[I]$ if and only if $A^D$ equals the identity element $I\in\GL_2(\F_q)$ times a constant. The latter holds if and only if $\alpha^D$ and $\alpha^{qD}$ are equal. Observe that $\alpha^D=\alpha^{qD}$ if and only if $\alpha^{(q-1)D}=1$. In particular, since $\alpha\in \F_{q^2}\setminus \F_q$, we have $D>1$ and $D$ divides $q+1$. If $D=2$, then $\alpha^{q}=-\alpha$, a contradiction since $A$ is not of type $3$.
\end{proof}

\section{On $G$-invariants: the noncyclic case}
Here we provide the proof of Theorem~\ref{thm:main-1}, showing the triviality of $G$-invariants when $G$ is a noncyclic subgroup of $\PGL_2(\F_q)$. We start with the following definition.

\begin{define}
Let $\overline{\F}_q$ be the algebraic closure of $\F_q$. For $A=\left( \begin{array}{cc}a&b\\ c&d\end{array}\right)\in \GL_2(\F_q)$ and $\alpha\in \overline{\F}_q\setminus \F_q$,
$$[A]\circ \alpha:=\frac{d\alpha-c}{a-b\alpha}.$$
\end{define}

According to Lemma 2.6 and Theorem 4.2 of~\cite{ST12}, we have the following result.

\begin{lemma}\label{lem:aux-10}
Let $f\in \F_q[x]$ be a monic irreducible polynomial of degree at least two and $\alpha\in \overline{\F}_q\setminus \F_q$. Let $A, B\in \GL_2(\F_q)$, the following holds:

\begin{enumerate}[(i)]
\item $f$ if $[A]$-invariant if and only if $f$ divides $F_{A, r}$ for some $r\ge 0$ or, equivalently, $[A]\circ \alpha=\alpha^{q^r}$;
\item $[A]\circ ([B]\circ \alpha)=[AB]\circ \alpha$.
\end{enumerate}
\end{lemma}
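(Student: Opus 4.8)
The plan is to reduce both items to explicit computations with the Möbius action on a single root $\alpha$ of $f$, using crucially that the entries of $A$ lie in $\F_q$. Fix a root $\alpha$ of $f$ in $\overline{\F}_q$; since $f$ is irreducible of degree $k\ge 2$, we have $\alpha\in\F_{q^k}\setminus\F_q$, and the same holds for every conjugate $\alpha^{q^i}$. I would first record the auxiliary fact that the map $\beta\mapsto[A]\circ\beta$ commutes with the Frobenius: writing $[A]\circ\beta=\frac{d\beta-c}{a-b\beta}$ and raising to the $q$-th power, the relations $a^q=a,\dots,d^q=d$ give $([A]\circ\beta)^q=\frac{d\beta^q-c}{a-b\beta^q}=[A]\circ(\beta^q)$. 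Here I would note the denominator never vanishes on $\overline{\F}_q\setminus\F_q$: if $b=0$ then $a\ne0$ by invertibility, while if $b\ne0$ then $a-b\beta=0$ would force $\beta=a/b\in\F_q$, a contradiction.

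For item (i), I would next compute the roots of $A\circ f$. Since $A\circ f=(bx+d)^k\prod_i\bigl(\tfrac{ax+c}{bx+d}-\alpha_i\bigr)=\prod_i\bigl((a-b\alpha_i)x-(d\alpha_i-c)\bigr)$ over the roots $\alpha_i$ of $f$, each factor is genuinely linear (as $\alpha_i\notin\F_q$) with root $\frac{d\alpha_i-c}{a-b\alpha_i}=[A]\circ\alpha_i$; hence the roots of $A\circ f$ are exactly the images $[A]\circ\alpha_i$. By Lemma~\ref{lem:aux-mobius-action}(i) the normalization $[A]\circ f$ lies in $\I_k$, so it equals the monic $f$ if and only if the two share the same root set, i.e.\ iff $[A]\circ\alpha$ is a root of $f$, i.e.\ iff $[A]\circ\alpha=\alpha^{q^r}$ for some $r\ge0$. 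The Frobenius-equivariance above closes the loop: once $[A]\circ\alpha=\alpha^{q^r}$, applying it gives $[A]\circ\alpha^{q^i}=\alpha^{q^{r+i}}$, so the root set is permuted onto itself and $[A]\circ f=f$ indeed holds. Finally I would translate this into divisibility: clearing denominators in $\frac{d\alpha-c}{a-b\alpha}=\alpha^{q^r}$ yields $b\alpha^{q^r+1}-a\alpha^{q^r}+d\alpha-c=0$, i.e.\ $F_{A,r}(\alpha)=0$; since $f$ is the minimal polynomial of $\alpha$ over $\F_q$ and $F_{A,r}\in\F_q[x]$, this is equivalent to $f\mid F_{A,r}$.

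For item (ii), I would simply carry out the substitution. Writing the entries of $B$ as $a',b',c',d'$ and $\beta=[B]\circ\alpha=\frac{d'\alpha-c'}{a'-b'\alpha}$, I would insert $\beta$ into $[A]\circ\beta=\frac{d\beta-c}{a-b\beta}$ and clear the common factor $a'-b'\alpha$ from numerator and denominator. The resulting numerator and denominator are linear in $\alpha$ whose coefficients are precisely the entries of the matrix product $AB$, so comparing with the definition of $[AB]\circ\alpha$ gives the identity.

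The two algebraic manipulations are routine; the one step needing care is the logical equivalence in item (i), namely that equality of the monic irreducibles $[A]\circ f$ and $f$ is controlled by the single scalar condition $[A]\circ\alpha=\alpha^{q^r}$. This rests on combining the Frobenius-equivariance of the action with the irreducibility of $f$, and it is the conceptual heart of the lemma; everything else is bookkeeping with the fixed entries $a,b,c,d\in\F_q$.
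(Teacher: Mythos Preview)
Your argument is correct and self-contained. The paper does not actually prove this lemma: it merely records it as a consequence of Lemma~2.6 and Theorem~4.2 of \cite{ST12}, so there is no in-paper proof to compare against. Your approach---computing the roots of $A\circ f$ explicitly as $\{[A]\circ\alpha_i\}$, using Frobenius-equivariance of the M\"obius action to reduce the set-equality of roots to the single condition $[A]\circ\alpha=\alpha^{q^r}$, and then clearing denominators to identify this with $F_{A,r}(\alpha)=0$---is exactly the natural direct argument and is essentially how the cited results in \cite{ST12} are established. One small point worth making explicit in item~(ii): before applying your denominator check to $[A]\circ\beta$, you need $\beta=[B]\circ\alpha\notin\F_q$; this follows because a M\"obius map with $\F_q$-entries sends $\mathbb P^1(\F_q)$ to itself bijectively, hence preserves the complement $\overline{\F}_q\setminus\F_q$.
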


From the previous lemma, we have the following result.

\begin{lemma}\label{lem:aux-divisor}
Let $r$ be a non-negative integer and $A_1, A_2\in \GL_2(\F_q)$ such that $[A_1]\ne [A_2]$ in $\PGL_2(\F_q)$. If $\alpha\in \overline{\F}_q\setminus \F_q$ is such that $[A_1]\circ \alpha=[A_2]\circ \alpha$, then $\alpha\in \F_{q^2}$.
\end{lemma}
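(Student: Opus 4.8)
The plan is to translate the hypothesis $[A_1]\circ\alpha=[A_2]\circ\alpha$ into a polynomial equation that $\alpha$ must satisfy, and then bound the degree of that polynomial by $2$. Writing $A_i=\left(\begin{smallmatrix}a_i&b_i\\c_i&d_i\end{smallmatrix}\right)$, the definition of $[A_i]\circ\alpha$ gives
$$\frac{d_1\alpha-c_1}{a_1-b_1\alpha}=\frac{d_2\alpha-c_2}{a_2-b_2\alpha}.$$
Cross-multiplying yields $(d_1\alpha-c_1)(a_2-b_2\alpha)=(d_2\alpha-c_2)(a_1-b_1\alpha)$, which after rearranging is a polynomial identity $P(\alpha)=0$ with $P(x)\in\F_q[x]$ of degree at most $2$; the coefficient of $x^2$ is $b_1d_2-b_2d_1$, the coefficient of $x^0$ is $c_2a_1-c_1a_2$, and the middle coefficient is $(a_2d_1-a_1d_2)-(b_2c_1-b_1c_2)$. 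Since $\alpha\notin\F_q$, it cannot be a root of a nonzero linear polynomial over $\F_q$, so either $P$ is the zero polynomial or $\alpha$ is a root of a genuine quadratic over $\F_q$; in the latter case $\alpha\in\F_{q^2}$ and we are done.

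The remaining task is to rule out $P\equiv 0$. I claim that $P$ being identically zero forces $[A_1]=[A_2]$ in $\PGL_2(\F_q)$, contradicting the hypothesis. Indeed, note that the map $A\mapsto [A]\circ\alpha$ can be read off from the matrix $\widetilde{A}:=\left(\begin{smallmatrix}d&-c\\-b&a\end{smallmatrix}\right)$ acting as a Möbius transformation on $\alpha$, and $\widetilde{A}$ is (up to transpose/sign conventions) the adjugate of $A$, hence $\det\widetilde{A}=\det A\neq 0$. Saying $P\equiv 0$ is exactly saying that the two Möbius transformations associated to $\widetilde{A_1}$ and $\widetilde{A_2}$ agree as rational functions in the formal variable $x$, i.e. $\frac{d_1x-c_1}{-b_1x+a_1}=\frac{d_2x-c_2}{-b_2x+a_2}$ identically. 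Two invertible $2\times2$ matrices induce the same Möbius transformation if and only if they are scalar multiples of one another, so $\widetilde{A_2}=\lambda\widetilde{A_1}$ for some $\lambda\in\F_q^*$, and applying the adjugate (or inverse-adjugate) relation back gives $A_2=\lambda' A_1$ for some $\lambda'\in\F_q^*$, i.e. $[A_1]=[A_2]$.

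The main obstacle — though a mild one — is making the last equivalence clean without circular reasoning: I want to avoid invoking $[A_i]\circ$ acting on polynomials and instead argue purely at the level of fractional linear maps in $\F_q(x)$. Concretely, I would prove the elementary fact that if $\frac{\gamma x+\delta}{\epsilon x+\zeta}=\frac{\gamma' x+\delta'}{\epsilon' x+\zeta'}$ as elements of $\F_q(x)$, with both denominators nonzero and both matrices invertible, then $(\gamma',\delta',\epsilon',\zeta')=\lambda(\gamma,\delta,\epsilon,\zeta)$: clear denominators to get $(\gamma x+\delta)(\epsilon' x+\zeta')=(\gamma' x+\delta')(\epsilon x+\zeta)$ and compare coefficients of $x^2$, $x$, and $1$, then use invertibility ($\gamma\zeta-\delta\epsilon\neq0$) to solve out the proportionality constant. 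Alternatively, and perhaps more economically, I could simply observe that $[A_1]\circ\alpha=[A_2]\circ\alpha$ is equivalent to $[A_2^{-1}A_1]\circ\alpha=\alpha$ by Lemma~\ref{lem:aux-10}(ii); if $[A_2^{-1}A_1]\neq[I]$ then $\alpha$ is a fixed point of a nontrivial element of $\PGL_2(\F_q)$, and such fixed points are roots of a nonzero polynomial of degree at most $2$ over $\F_q$ (the numerator of $[B]\circ x - x$), forcing $\alpha\in\F_{q^2}$; and if $[A_2^{-1}A_1]=[I]$ then $[A_1]=[A_2]$, contradiction. This second route is shorter and I would likely present it, keeping the coefficient computation only as the justification that fixed points of a nontrivial Möbius transformation satisfy a quadratic.
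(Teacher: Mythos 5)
Your second, preferred route is exactly the paper's proof: reduce to $[B]\circ\alpha=\alpha$ with $[B]=[A_1][A_2]^{-1}\neq[I]$ via Lemma~\ref{lem:aux-10}(ii), and observe that this gives a nonzero polynomial equation of degree at most $2$ over $\F_q$, forcing $\alpha\in\F_{q^2}$. Your explicit check that the fixed-point polynomial $bx^2+(d-a)x-c$ vanishes identically only when $[B]=[I]$ is a detail the paper leaves implicit, and is worth keeping.
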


\begin{proof}
From hypothesis, $[B]\circ \alpha=\alpha$, where $[B]=[A_1][A_2]^{-1}$. However, if $[B]$ is not the identity, the equality $[B]\circ \alpha=\alpha$ yields a polynomial equation of degree at most $2$ in $\alpha$ with coefficients in $\F_q$. Therefore, $\alpha\in \F_{q^2}$.
\end{proof}

\subsection{Proof of  Theorem~\ref{thm:main-1}}
We observe that it suffices to prove the theorem in the case that $G$ is a noncyclic group of $\PGL_2(\F_q)$, generated by two elements $[A_1], [A_2]\in \PGL_2(\F_q)$. Let $D_1$ and $D_2$ be the orders of $[A_1]$ and $[A_2]$, respectively. We recall that, for any element $[A]\in \PGL_2(\F_q)$ of order $d$, the $[A]$-invariants have degree two or degree divisible by $d$. In particular, if there exists a monic irreducible polynomial $f\in \F_q[x]$ of degree $n\ge 3$ that is $G$-invariant, then $n$ is divisible by $D_1$ and $D_2$. Therefore, $n=\frac{D_1D_2}{D}\cdot n_0$ for some positive integer $n_0$, where $D=\gcd(D_1, D_2)$. In addition, from Lemmas~\ref{ST4.5} and~\ref{power} we conclude that there exist positive integers $j_1\le D_1$ and $j_2\le D_2$ such that $\gcd(j_1, D_1)=\gcd(j_1, D_2)=1$ and $f$ divides both $F_{A_1^{j_1}, \frac{D_2n_0}{D}}(x)$ and $F_{A_2^{j_2}, \frac{D_1n_0}{D}}(x)$. In other words, if we set $[B_i]=[A_i]^{j_i}$ for $i=1,2$, we have that

\begin{equation}\label{eq:circ}
[B_1]\circ \alpha=\alpha^{q^{D_2n_0/D}}\quad\text{and}\quad [B_2]\circ \alpha=\alpha^{q^{D_1n_0/D}},
\end{equation}
for any root $\alpha\in \overline{\F}_q\setminus \F_{q^2}$ of $f$. In particular, we have the following equalities
$$[B_1B_2]=([B_1]\circ \alpha)^{q^{D_1n_0/D}}=\alpha^{q^{(D_1+D_2)n_0/D}}=([B_2]\circ \alpha)^{q^{D_2n_0/D}}=[B_2B_1]\circ \alpha.$$
In addition, from Eq.~\eqref{eq:circ}, we have that
$$[B_1]^{D_1/D}\circ \alpha=\alpha^{q^{D_1D_2n_0/D}}=[B_2]^{D_2/D}\circ \alpha.$$
Since $\gcd(j_1, D_1)=\gcd(j_2, D_2)=1$, the elements $[B_1]$ and $[B_2]$ have orders $D_1$ and $D_2$, respectively, and they also generate $G$. In addition, since $\alpha$ is not in $\F_{q^2}$, the previous equalities and Lemma~\ref{lem:aux-divisor} entail that $[B_1]\cdot [B_2]=[B_2]\cdot [B_1]$ and $[B_1]^{D_1/D}=[B_2]^{D_2/D}$. Now, the proof of Theorem~\ref{thm:main-1} follows from the following result.

\begin{prop}
Let $G$ be an abelian subgroup of $\PGL_2(\F_q)$, generated by two elements $g_1, g_2$ of orders $d_1$ and $d_2$. If $d=\gcd(d_1, d_2)$ and $g_1^{d_1/d}=g_2^{d_2/d}$, then $G$ is cyclic.
\end{prop}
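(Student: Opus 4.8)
The plan is to reduce to a purely group-theoretic statement about finite abelian groups and argue via the structure theorem, or more elementarily via the observation that an abelian group generated by two elements whose orders have a common multiple relation is a quotient of a nice cyclic-by-cyclic group. First I would set $h = g_1^{d_1/d} = g_2^{d_2/d}$; this common element has order dividing $d$, and in fact one checks its order is exactly $d$ (since $g_1^{d_1/d}$ has order $d_1/\gcd(d_1,d_1/d) = d$, as $\gcd(d_1, d_1/d)$ divides $d_1/d$ and $d_1/d$ divides it... more carefully, $g_1^{d_1/d}$ has order $d_1/\gcd(d_1, d_1/d)$, and since $d \mid d_1$ one has $d_1/d \mid d_1$, giving order $d$ precisely when $\gcd(d_1, d_1/d) = d_1/d$, i.e. when $(d_1/d) \mid d$; this need not hold in general, so I should instead just let $e = \ord(h)$ and carry $e$ along). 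The key structural point is that $G = \langle g_1, g_2 \rangle$ with $g_1, g_2$ commuting, so $G$ is a quotient of $\Z/d_1 \times \Z/d_2$, hence $|G| \le d_1 d_2$, but the relation $g_1^{d_1/d} = g_2^{d_2/d}$ cuts this down.

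The cleanest route I would take: since $G$ is abelian and finite, write $G \cong \Z/n_1 \times \cdots \times \Z/n_k$ with $n_1 \mid n_2 \mid \cdots \mid n_k$; I want to show $k = 1$. Suppose not, so the rank is at least $2$, and let $\pi$ be a prime dividing $n_1$ (hence dividing every $n_i$), so that $G$ has a quotient isomorphic to $(\Z/\pi)^2$. Composing with this quotient, the images $\bar g_1, \bar g_2$ must generate $(\Z/\pi)^2$, hence they are linearly independent over $\F_\pi$. But the exponents: $\pi \mid d_i$ is forced (since $\bar g_i$ is nontrivial — if some $\bar g_i$ were trivial then $\bar g_{3-i}$ alone would generate $(\Z/\pi)^2$, impossible), and from $\pi \mid d_1$, $\pi \mid d_2$ and $d = \gcd(d_1,d_2)$ we get $\pi \mid d$. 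Then $d_1/d$ and $d_2/d$ are both coprime to $\pi$ OR not — here is the crux. Actually the relation $g_1^{d_1/d} = g_2^{d_2/d}$ in $G$ descends to $\bar g_1^{\,d_1/d} = \bar g_2^{\,d_2/d}$ in $(\Z/\pi)^2$. Writing this additively with $\bar g_1, \bar g_2$ a basis, $(d_1/d)\bar g_1 = (d_2/d)\bar g_2$ forces $d_1/d \equiv 0$ and $d_2/d \equiv 0 \pmod \pi$, i.e. $\pi \mid d_1/d$ and $\pi \mid d_2/d$. Combined with $\pi \mid d$ this gives $\pi^2 \mid d_1$ and $\pi^2 \mid d_2$, hence $\pi^2 \mid d$ — but wait, that is not yet a contradiction by itself; I would iterate: the same argument with the full $\pi$-primary structure shows the $\pi$-adic valuations cannot all line up, OR, better, I use the sharper fact $\ord(g_i) = d_i$ together with $\langle g_1^{d_1/d}\rangle = \langle g_2^{d_2/d}\rangle$...

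Let me instead describe the argument I would actually write, which avoids prime-by-prime bookkeeping. Consider the homomorphism $\Z/d_1 \times \Z/d_2 \to G$ sending generators to $g_1, g_2$; it is surjective, so $|G| = d_1 d_2 / |K|$ where $K$ is the kernel. The element $(d_1/d, -d_2/d)$ (well-defined since $d \mid d_1, d \mid d_2$) lies in $K$ by hypothesis. This element has order $d$ in $\Z/d_1 \times \Z/d_2$: indeed $k \cdot (d_1/d, -d_2/d) = 0$ iff $d_1 \mid k d_1/d$ and $d_2 \mid k d_2/d$, iff $d \mid k$ (for both coordinates simultaneously, and $d = \gcd$ makes $d$ the exact order since e.g. the first coordinate alone has order $d$). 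Hence $|K| \ge d$, so $|G| \le d_1 d_2 / d = \mathrm{lcm}(d_1, d_2)$. On the other hand $G$ contains $g_1$ of order $d_1$ and $g_2$ of order $d_2$, so the exponent of $G$ is divisible by $\mathrm{lcm}(d_1,d_2)$; since $G$ is abelian, its exponent equals $n_k$ in the invariant-factor decomposition, so $n_k \ge \mathrm{lcm}(d_1,d_2) \ge |G| = n_1 \cdots n_k \ge n_k$. Equality throughout forces $n_1 = \cdots = n_{k-1} = 1$, i.e. $G$ is cyclic. The main obstacle — the one place requiring care rather than being routine — is the exact-order computation of $(d_1/d, -d_2/d)$ in $\Z/d_1 \times \Z/d_2$, i.e. verifying $|K| \ge d$ sharply (not merely $|K| \ge$ something); once that lower bound on $|K|$, equivalently the upper bound $|G| \le \operatorname{lcm}(d_1,d_2)$, is in hand, the squeeze against the exponent is immediate. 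I would present it essentially as above, perhaps phrasing the final squeeze as: an abelian group whose order equals its exponent is cyclic.

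\begin{proof}
Since $G$ is abelian and generated by $g_1, g_2$, the assignment $g_1 \mapsto (1,0)$, $g_2 \mapsto (0,1)$ extends to a surjective homomorphism $\psi\colon \Z/d_1 \times \Z/d_2 \to G$; let $K = \ker\psi$, so that $|G| = d_1 d_2 / |K|$. Because $d = \gcd(d_1,d_2)$ divides both $d_1$ and $d_2$, the element $v = (d_1/d,\, -d_2/d)$ is a well-defined element of $\Z/d_1 \times \Z/d_2$, and the hypothesis $g_1^{d_1/d} = g_2^{d_2/d}$ says exactly that $\psi(v) = 1$, i.e. $v \in K$. We claim $v$ has order $d$ in $\Z/d_1 \times \Z/d_2$. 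Indeed, the first coordinate $d_1/d \in \Z/d_1$ has order $d_1/\gcd(d_1, d_1/d) = d$ (as $d_1/d \mid d_1$), so the order of $v$ is a multiple of $d$; conversely $d\cdot v = (d_1,\, -d_2) = (0,0)$, so the order of $v$ divides $d$. Hence $\langle v\rangle \subseteq K$ has order $d$, giving $|K| \ge d$ and therefore
\[
|G| = \frac{d_1 d_2}{|K|} \le \frac{d_1 d_2}{d} = \operatorname{lcm}(d_1, d_2).
\]
On the other hand, $G$ contains elements of orders $d_1$ and $d_2$, so the exponent $\exp(G)$ of the abelian group $G$ is divisible by $\operatorname{lcm}(d_1,d_2)$, whence $\exp(G) \ge \operatorname{lcm}(d_1,d_2) \ge |G|$. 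Since always $\exp(G) \le |G|$, we conclude $\exp(G) = |G|$. A finite abelian group whose exponent equals its order is cyclic: writing $G \cong \Z/n_1 \times \cdots \times \Z/n_k$ with $n_1 \mid \cdots \mid n_k$, one has $\exp(G) = n_k$ and $|G| = n_1 \cdots n_k$, so $n_1 \cdots n_k = n_k$ forces $n_1 = \cdots = n_{k-1} = 1$. Thus $G$ is cyclic.
\end{proof}
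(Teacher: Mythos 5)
Your proof is correct, and it takes a genuinely different --- and more general --- route than the paper's. The paper argues by a four-way case analysis on the conjugacy type of $g_1$ in $\PGL_2(\F_q)$, computing the centralizer of $g_1$ in each case and showing either that this centralizer is cyclic or that $d_1$ and $d_2$ are coprime or equal; the relation $g_1^{d_1/d}=g_2^{d_2/d}$ is only invoked in some of those cases. You instead prove the purely group-theoretic statement that \emph{any} finite abelian group presented this way is cyclic, via the surjection $\Z/d_1\times\Z/d_2\to G$ whose kernel contains the order-$d$ element $(d_1/d,\,-d_2/d)$, giving $|G|\le \operatorname{lcm}(d_1,d_2)\le \exp(G)\le |G|$ and hence $\exp(G)=|G|$. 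All the steps check out: the exact-order computation of $v$, the divisibility of the exponent by $\operatorname{lcm}(d_1,d_2)$, and the standard fact that a finite abelian group whose exponent equals its order is cyclic. Your approach buys brevity and generality --- the ambient group $\PGL_2(\F_q)$ plays no role --- and it isolates exactly where the hypothesis $g_1^{d_1/d}=g_2^{d_2/d}$ is used (without it, $\Z/2\times\Z/2$ would be a counterexample); the paper's approach yields extra structural information specific to $\PGL_2(\F_q)$, namely explicit descriptions of centralizers, which is why it can dispense with that hypothesis in two of its four cases. One trivial slip: you wrote the homomorphism as ``$g_1\mapsto(1,0)$'' where you mean $(1,0)\mapsto g_1$; the intent is clear and nothing depends on it.
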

\begin{proof}
We observe that the structure of $G$ is not changed up to conjugation by an element of $\PGL_2(\F_q)$. In particular, we can suppose that $g_1$ is in reduced form. We have four cases to consider:
\begin{enumerate}[(i)]
\item If $g_1$ if of type $1$, $g_1=[A(a)]$ for some $a\in \F_q\setminus\{0, 1\}$. It is direct to verify that the centralizer of $g_1$ equals the group $\{[A(b)];\, b\in \F_q^*\}$. In particular, $g_2=[A(b)]$ for some $b\in \F_q^*$. Since $\F_q^*$ is cyclic, the subgroup of $\F_q^*$ generated by $a$ and $b$ is also cyclic. If $\theta$ is any generator for such a group, it follows that $G$ is generated by $g_3=[A(\theta)]$.
\item If $g_1$ is of type $2$, $g_1=[\mathcal E]$ has order $p=d_1$. In particular, from Lemma~\ref{lem:order}, we have that either $d_2=p$ or $d_2$ is not divisible by $p$. In particular, either $d_2=p$ or $d=1$. If $d_2=p$, we have that $d=p$ and so $g_1=g_2$. Hence, $G$ is generated by $g_1$. If $d=1$, we have that the orders of $g_1$ and $g_2$ are relatively prime and so $G$ is generated by $g_3=g_1g_2$.
\item If $g_1$ is of type $3$, $g_1$ has order $d_1=2$. If $d_2$ is even, we have that $d=2$ and so $g_1=g_2^{d_2/2}$, hence $G$ is generated by $g_1$. If $d_2$ is odd, the orders of $g_1$ and $g_2$ are relatively prime and so $G$ is generated by $g_3=g_1g_2$.
\item If $g_1$ is of type $4$, $g_1=[D(c)]$ for some $c\in \F_q$ such that $x^2-x-c$ is irreducible over $\F_q$. It is direct to verify that the centralizer of $g_1$ equals the group $G_0=\{[A_t];\, t\in \F_q\}\cup\{[I]\}$, where $A_t=t\cdot I+D(c)$. In particular, $g_1, g_2\in G_0$ and so $G$ is a subgroup of $G_0$. We observe that $D(c)^2=D(c)+cI$ and then, for any $s, t\in \F_q$, the following holds:
$$[A_s]\cdot [A_t]=\begin{cases}[I]&\text{if}\;{s+t+1=0,}\\ 
[A_{h(s, t)}], h(s, t)=\frac{st+c}{s+t+1}&\text{otherwise}.\end{cases}$$ 
 However, since $x^2-x-c\in \F_q[x]$ is irreducible, $K=\F_q[x]/(x^2-x-c)$ is the finite field with $q^2$ elements. Let $k\alpha+w$ be a primitive element of $K=\F_{q^2}$, where $\alpha$ is a root of $x^2-x-c$ and $k, w\in \F_q$ with $k\ne 0$. We claim that, in this case, $G_0$ is the cyclic group generated by $[A_{w/k}]$. In fact, we have $[A_{w/k}]\in G_0$ and, for any $s\in \F_q$, $\alpha+s=(k\alpha+w)^r$ for some $r\in \N$. Hence $$(kx+w)^r\equiv x+s\pmod{x^2-x-c},$$ and then $(kA+wI)^r=A+sI=A_s$. Taking equivalence classes in $\PGL_2(\F_q)$ we obtain $[A_{w/k}]^r=[A_s]$, hence $G_0$ is cyclic. Since $G$ is a subgroup of $G_0$, it follows that $G$ is also cyclic. 
\end{enumerate}
\end{proof}

\subsection{A remark on quadratic invariants}
We observe that, if $f\in \F_q[x]$ is a quadratic irreducible polynomial and $H$ is a subgroup of $\PGL_2(\F_q)$ generated by elements $[B_1], \ldots, [B_s]$, $f$ is $H$-invariant if and only if $B_i\circ f=\lambda_i \cdot f$ for some $\lambda_i\in \F_q$; if we write $f(x)=x^2+ax+b$, the equalities $B_i\circ f=\lambda_i\cdot f$ yield a system of $s$ equations, where $a, b$ and the $\lambda_i$'s are variables. The coefficients of these equations arise from the entries of each $B_i$. Given the elements $B_i$, we can easily discuss the solutions of this system. We remark that we can actually have monic irreducible quadractic polynomials that are invariant by noncyclic groups. For instance, if $q$ is odd and $b\in \F_q^{*}$ is not a square, the polynomial $f=x^2-b$ is invariant by the (noncyclic) group
generated by the elements
$$H_1=\left(\begin{matrix}
-1&0\\ 0&1
\end{matrix}\right)\quad\text{and}\quad H_2=\left(\begin{matrix}
0&1\\ b&0
\end{matrix}\right).$$
For $q$ even, if $c\in \F_q$ is such that $x^2+x+c$ is irreducible over $\F_q$, this polynomial is invariant by the (noncyclic) group generated by the elements
$$H_1=\left(\begin{matrix}
1&0\\1&1
\end{matrix}\right)\quad\text{and}\quad H_2=\left(\begin{matrix}
0&1\\ c&1
\end{matrix}\right).$$

\section{On the number of $[A]$-invariants}
In this section we provide complete enumeration formulas for the number of $[A]$-invariants of degree $n>2$, proving Theorem~\ref{thm:main-2}. We naturally study the number of $[A]$-invariants according to the type of $A$. From Theorem~\ref{thm:conjugates-main}, it suffices to consider elements of $\PGL_2(\F_q)$ of type $1\le t\le 4$ in reduced form. We start with elements of type $1$ and $2$. If $A$ has type $t=1$ (resp. $t=2$) and is in reduced form, we see that $A\circ f$ corresponds to $f(ax)$ (resp. $f(x+1)$). From definition, $[A]\circ f=f$ if and only if $A\circ f=\lambda\cdot f$ for some $\lambda\in \F_q^*$. A comparison on the leading coefficient (resp. the constant term) of the equality $f(x+1)=\lambda\cdot f(x)$ (resp. $f(ax)=\lambda\cdot f(x)$), entails the following fact:

\begin{center}``If $A$ is of type $1$ or $2$ in reduced form and $f\in \F_q[x]$ is a monic irreducible polynomial of degree $k\ge 2$, then $[A]\circ f=f$ if and only if $A\circ f=f$.''\end{center}
Therefore, we are looking for the monic irreducible polynomials $f\in \F_q[x]$ of degree $n$ that satisfies $f(x)=f(ax)$ or $f(x)=f(x+1)$. The number of monic irreducible polynomials satisfying such identities was provided in Theorems~2 and~4 of~\cite{Gar11}. Combining these theorems with Theorem~\ref{thm:conjugates-main}, we easily obtain the following result.

\begin{lemma}
Let $[A]\in \PGL_2(\F_q)$ be an element of type $t\le 2$ and order $D$. Then, for any integer $n>2$, the number $\n_{A}(n)$ of $[A]$-invariants of degree $n$ is \emph{zero} if $n$ is not divisible by $D$ and, for $n=Dm$ with $m\in \N$, the following holds:
\begin{equation*}\n_{A}(Dm)=\frac{\varphi(D)}{Dm}\sum_{d|m\atop \gcd(d, D)=1}\mu(d)(q^{m/d}-\varepsilon),\end{equation*}
where $\varepsilon=1$ if $t=1$ and $\varepsilon=0, D=p$ if $t=2$.
\end{lemma}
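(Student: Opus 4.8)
The plan is to reduce the statement for types $1$ and $2$ to the counting results of García (Theorems~2 and~4 of~\cite{Gar11}) on irreducible polynomials invariant under $x\mapsto ax$ and $x\mapsto x+1$, and then to package the answer in the shape of Eq.~\eqref{eq:invariants-formula} using Theorem~\ref{thm:conjugates-main}. First I would invoke Theorem~\ref{thm:conjugates-main}: since $[A]$ of type $t\le 2$ is conjugate (in $\PGL_2(\F_q)$) to $[A(a)]$ with $a\in \F_q\setminus\{0,1\}$ (type $1$) or to $[\mathcal E]$ (type $2$), we have $\n_A(n)=\n_{A(a)}(n)$ resp. $\n_A(n)=\n_{\mathcal E}(n)$, so it suffices to prove the formula in reduced form. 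Next, using the displayed observation in the text (``if $A$ is of type $1$ or $2$ in reduced form, then $[A]\circ f=f$ iff $A\circ f=f$''), the $[A]$-invariants of degree $n$ are precisely the monic irreducibles $f$ of degree $n$ satisfying $f(ax)=f(x)$ (type $1$, with $a$ of multiplicative order $D=\ord([A(a)])$ by Lemma~\ref{lem:order}(i)) or $f(x+1)=f(x)$ (type $2$, with $D=p$ by Lemma~\ref{lem:order}(ii)).

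For type $1$: an irreducible $f$ of degree $n$ satisfies $f(ax)=f(x)$ iff its root set is stable under multiplication by $a$; a standard argument (the stabilizer of $f$ in $\an{x\mapsto ax}$ has order dividing $n$, and genuine invariance forces the full orbit) shows such $f$ exist only when $D\mid n$, and for $n=Dm$ their number is governed by a Möbius-type count over divisors $d\mid m$ with $\gcd(d,D)=1$. Concretely, García's Theorem~2 gives that the number of monic irreducible $f$ of degree $Dm$ with $f(ax)=f(x)$ equals
\[
\frac{\varphi(D)}{Dm}\sum_{d\mid m,\ \gcd(d,D)=1}\mu(d)\bigl(q^{m/d}-1\bigr),
\]
which is exactly the claimed formula with $\varepsilon=1$. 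The $-1$ term records that one must exclude the fixed point(s) of $x\mapsto ax$ on $\P^1$, i.e.\ the linear factors $x$ and (after accounting for orbits) the element $0$; this is where the $\varepsilon=1$ enters. For type $2$: here $x\mapsto x+1$ has order $p$ and acts on $\F_q$ with all orbits of size $p$ except the point at infinity, which is fixed; García's Theorem~4 gives that the number of monic irreducible $f$ of degree $pm$ with $f(x+1)=f(x)$ equals
\[
\frac{\varphi(p)}{pm}\sum_{d\mid m,\ \gcd(d,p)=1}\mu(d)\,q^{m/d},
\]
with no correction term since $x\mapsto x+1$ has no affine fixed point, giving $\varepsilon=0$, $D=p$. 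In both cases, zero when $D\nmid n$ follows from the orbit-size divisibility.

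The main obstacle is purely bookkeeping: matching the exact form of García's enumeration formulas (which in \cite{Gar11} are stated for the specific substitutions $x\mapsto ax$ and $x\mapsto x+1$, possibly with slightly different indexing or normalization) to the unified expression $\frac{\varphi(D)}{Dm}\sum_{d\mid m,\ \gcd(d,D)=1}\mu(d)(q^{m/d}-\varepsilon)$, and in particular verifying that the $\varphi(D)$ factor and the restriction $\gcd(d,D)=1$ appear correctly in both. I would handle this by checking that $\varphi(D)$ counts the number of valid ``exponents'' $\ell$ (the $\ell$ of Lemma~\ref{ST4.5}(i)) and that the coprimality condition on $d$ reflects Lemma~\ref{ST4.5}(ii) and Lemma~\ref{power} restricting the degrees of irreducible factors of $F_{A,r}$. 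Once the dictionary between the two formulations is fixed, the identification is immediate and the lemma follows.
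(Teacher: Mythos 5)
Your proposal follows the same route as the paper: reduce to reduced form via Theorem~\ref{thm:conjugates-main}, use the observation that for types $1$ and $2$ in reduced form $[A]\circ f=f$ is equivalent to $f(ax)=f(x)$ resp.\ $f(x+1)=f(x)$, and then quote Theorems~2 and~4 of~\cite{Gar11} for the counts. The heuristic asides about orbit sizes and the origin of the $-1$ term are not needed but do no harm; the identification with the stated formula is exactly the bookkeeping the paper performs.
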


Recall that an element of type $3$ in reduced form equals $C(b)$ for some non square $b\in \F_q^*$ and $[C(b)]$ has order two. We observe that, from definition, $[C(b)]\circ f=f$ if and only if $x^{2m}f\left(\frac{b}{x}\right)=\lambda\cdot f(x)$, 
for some $\lambda\in \F_q^*$. Since $b$ is not a square in $\F_q$, the polynomial $x^2-b$ is irreducible over $\F_q$. In particular, if $\theta\in \F_{q^2}$ is a root of $x^2-b$, evaluating both sides of the previous equality at $x=\theta$, we obtain $b^mf(\theta)=\lambda f(\theta)$.
If $f$ is monic irreducible and has degree $2m\ge 3$, $f$ is not divisible by $x^2-b$ and so $f(\theta)\ne 0$. Therefore, $\lambda=b^m$, i.e., $f$ is $[C(b)]$-invariant if and only if
$x^{2m}f\left(\frac{b}{x}\right)=b^m\cdot f(x)$. The number of monic irreducible polynomials satisfying the previous identity was obtained in Corollary~7 of~\cite{MP17}. Combining this corollary with Theorem~\ref{thm:conjugates-main}, we easily obtain the following result.

\begin{lemma}
Let $[A]\in \PGL_2(\F_q)$ be an element of type $3$. In particular, its order is $D=2$. Then, for any integer $n>2$, the number $\n_{A}(n)$ of $[A]$-invariants of degree $n$ is \emph{zero} if $n$ is not divisible by $D$ (i.e., $n$ is odd) and, for $n=2m$ with $m\in \N$, the following holds:
\begin{equation*}\n_{A}(2m)=\frac{1}{2m}\left(-1+\sum_{d|m\atop \gcd(d, 2)=1}\mu(d)q^{m/d}\right).\end{equation*}
\end{lemma}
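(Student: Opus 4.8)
The plan is to reduce to a representative in reduced form, translate $[A]$-invariance into a norm condition over $\F_{q^{2m}}$, and then count by M\"obius inversion (equivalently, invoke Corollary~7 of~\cite{MP17}). First, by Theorem~\ref{thm:types} an element of type $3$ is conjugate to some $[C(b)]$ with $b\in\F_q^*$ a non-square, and by Theorem~\ref{thm:conjugates-main} this conjugation preserves $\n_A(n)$; hence I may assume $A=C(b)$. The vanishing of $\n_A(n)$ for odd $n>2$ is then immediate from the degree-divisibility fact used in Section~3 (a consequence of Lemma~\ref{ST4.5}): since $\ord([A])=2$, any invariant of degree $n>2$ has even degree, so no invariant of odd degree $n>2$ exists.

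For $n=2m$ I would use the reformulation recorded just before the statement, namely that a monic irreducible $f$ of degree $2m\ge 3$ is $[C(b)]$-invariant if and only if $x^{2m}f(b/x)=b^m f(x)$. By Lemma~\ref{lem:aux-10} together with Lemma~\ref{ST4.5}(i) (which for $D=2$ forces $\ell=1$ and $s=m$), this is equivalent to saying that a root $\alpha$ of $f$ satisfies $[C(b)]\circ\alpha=b/\alpha=\alpha^{q^m}$, i.e. $\alpha^{q^m+1}=b$. Since $q^m+1=(q^{2m}-1)/(q^m-1)$, this is exactly the norm condition $N_{\F_{q^{2m}}/\F_{q^m}}(\alpha)=b$. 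Consequently $\n_A(2m)$ equals $\tfrac1{2m}$ times the number of $\alpha\in\F_{q^{2m}}$ of degree exactly $2m$ over $\F_q$ with $N_{\F_{q^{2m}}/\F_{q^m}}(\alpha)=b$.

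To count these I would set $g(d)$ to be the number of solutions of $\alpha^{q^m+1}=b$ lying in $\F_{q^d}$, for $d\mid 2m$. Surjectivity of the norm map gives $g(2m)=q^m+1$; for a proper divisor $d$ the Frobenius exponent $q^m$ collapses to $q^{m\bmod d}$, so $g(d)$ reduces either to a shorter norm count or, when $d\mid m$, to counting solutions of $\alpha^2=b$. Because $b$ is a non-square, $\alpha^2=b$ has a solution in $\F_{q^d}$ precisely when $d$ is even, contributing the two roots of $x^2-b$; these are the only proper-subfield solutions that survive the inversion, and they are the source of the constant term. The degree-exactly-$2m$ count is then $\sum_{d\mid 2m}\mu(2m/d)g(d)$, which after simplification yields the sum over odd $d\mid m$ of $\mu(d)q^{m/d}$ together with a constant; matching this against Corollary~7 of~\cite{MP17}, and combining with Theorem~\ref{thm:conjugates-main}, completes the argument.

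The main obstacle is the bookkeeping in this final step: one must evaluate $g(d)$ for every $d\mid 2m$ and collapse $\sum_{d\mid 2m}\mu(2m/d)g(d)$ to closed form. The delicate piece is the constant term, which comes entirely from the degree-two solutions attached to $x^2-b$; isolating it requires combining the contributions of the even divisors $d\mid m$ with the $+1$ appearing in each norm count on the odd part of $m$, so I would verify this rearrangement carefully against small cases (for instance $m=2$ and $m=3$) before committing to the precise constant.
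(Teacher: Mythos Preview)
Your proposal is correct and follows essentially the same route as the paper: reduce to $A=C(b)$ via Theorems~\ref{thm:types} and~\ref{thm:conjugates-main}, obtain the functional equation $x^{2m}f(b/x)=b^m f(x)$, and then invoke Corollary~7 of~\cite{MP17} together with Theorem~\ref{thm:conjugates-main}. The paper's argument stops at the citation and does not carry out the direct norm-counting M\"obius inversion you sketch, so that portion is supplementary rather than needed.
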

In particular, cases $1$ and $2$ of Theorem~\ref{thm:main-2} are now proved.

\subsection{Elements of type 4}
Here we establish the last case of Theorem~\ref{thm:main-1}, that corresponds to elements of type $4$. Again, we only consider elements of type $4$ in reduced form. We emphasize that the previous enumeration formulas for elements of type $t\le 3$ are based in the \emph{Mobius Inversion Formula} and its generalizations. This inversion formula is often employed when considering the enumeration of irreducible polynomials with specified properties. We recall a nice generalization of this result.

\begin{theorem}\label{mobius}
Let $\chi:\mathbb N\to \mathbb C$ be a completely multiplicative function (which is, in other words, an homomorphism between the monoids $(\mathbb N, +)$ and $(\mathbb C, \cdot)$). Also let $\mathcal L, \mathcal K:\mathbb N\to \mathbb C$ be two functions such that
$$\mathcal{L}(n)=\sum_{d|n}\chi(d)\cdot \mathcal{K}\left(\frac{n}{d}\right), n\in \mathbb N.$$
Then,
$$\mathcal{K}(n)=\sum_{d|n}\chi(d)\cdot\mu(d)\cdot \mathcal{L}\left(\frac{n}{d}\right), n\in \mathbb N.$$
\end{theorem}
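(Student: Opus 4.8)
The statement to prove is Theorem~\ref{mobius}, the generalized Möbius inversion formula for a completely multiplicative function $\chi$ acting as a "twist" on the convolution. Let me sketch how I'd prove it.

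The plan is to proceed by a direct Dirichlet-convolution style argument, exactly mirroring the classical proof of Möbius inversion but carrying the multiplicative character $\chi$ along. First I would observe that, since $\chi$ is completely multiplicative (a monoid homomorphism $(\mathbb{N},\cdot)\to(\mathbb{C},\cdot)$, so $\chi(ab)=\chi(a)\chi(b)$ and $\chi(1)=1$), for any divisor chain $d \mid e \mid n$ we have $\chi(e) = \chi(d)\chi(e/d)$, which lets $\chi$ "factor through" nested sums. Then I would substitute the hypothesis $\mathcal{L}(n) = \sum_{d\mid n}\chi(d)\mathcal{K}(n/d)$ into the proposed right-hand side $\sum_{d\mid n}\chi(d)\mu(d)\mathcal{L}(n/d)$.

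The key computation is the interchange of summation. Writing the double sum over pairs $(d,e)$ with $d\mid n$ and $e \mid (n/d)$, I would use complete multiplicativity to combine $\chi(d)\cdot\chi(e) = \chi(de)$ and then re-index by setting $k = de$, so that $k$ ranges over divisors of $n$ and, for each fixed $k$, the pair $(d,e)$ ranges over all factorizations $de = k$ with $d\mid k$. This yields
$$\sum_{d\mid n}\chi(d)\mu(d)\mathcal{L}(n/d) = \sum_{k\mid n}\chi(k)\,\mathcal{K}(n/k)\sum_{d\mid k}\mu(d).$$
The inner sum $\sum_{d\mid k}\mu(d)$ is the classical identity equal to $1$ if $k=1$ and $0$ otherwise, so the entire expression collapses to the single term $k=1$, namely $\chi(1)\mathcal{K}(n) = \mathcal{K}(n)$, which is exactly what we want.

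I don't anticipate a genuine obstacle here — the argument is routine once one sets up the bookkeeping. The only point requiring a little care is justifying the re-indexing of the double sum and making sure the roles of $\chi$ and $\mu$ are kept straight: $\chi$ multiplies along the full divisor $k=de$ while $\mu$ is evaluated only at $d$ and then summed over $d\mid k$. One could alternatively phrase this more slickly in the language of Dirichlet convolution twisted by $\chi$ — defining $(\alpha * \beta)(n) = \sum_{d\mid n}\chi(d)\alpha(d)\beta(n/d)$ is associative and has the property that $\chi\mu$ is an inverse of the constant function $1$ — but the hands-on interchange of sums is the most transparent route for a self-contained proof.
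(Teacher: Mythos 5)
Your proof is correct. The paper states Theorem~\ref{mobius} without proof (it is recalled as a known generalization of M\"obius inversion), so there is no argument in the paper to compare against; your substitution, interchange of the double sum over pairs $(d,e)$ with $de=k\mid n$, use of complete multiplicativity to write $\chi(d)\chi(e)=\chi(k)$, and collapse via $\sum_{d\mid k}\mu(d)=[k=1]$ is exactly the standard and complete argument. The only point worth making explicit is that you need $\chi(1)=1$ at the final step; this is guaranteed because a completely multiplicative function is a monoid homomorphism from $(\mathbb N,\cdot)$ to $(\mathbb C,\cdot)$ and so sends the identity to the identity (the paper's parenthetical writes $(\mathbb N,+)$, which is evidently a typo for $(\mathbb N,\cdot)$).
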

An interesting class of completely multiplicative functions is the class of {\em Dirichlet Characters} and, for instance, the {\em principal Dirichlet character modulo $d$} is the function $\chi_{d}:\mathbb N\to \mathbb N$ such that $\chi_d(n)=1$ if $\gcd(d, n)=1$ and $\chi_d(n)=0$, otherwise. We first present a direct consequence of the results contained in~Subsection~\ref{subsec:lemma-aux}. 
\begin{lemma}\label{lem:type-4-enum-1}
Let $A$ be an element of $\GL_2(\F_q)$ and let $D$ be the order of $[A]$ in $\PGL_2(\F_q)$. Then, for any $m\in \N$, the $[A]$-invariants of degree $Dm>2$  are exactly the irreducible factors of degree $Dm$ of $F_{A^j, m}$, where $j$ runs over the positive integers $\le D-1$ such that $\gcd(j, D)=1$.
\end{lemma}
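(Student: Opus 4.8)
\textbf{Proof proposal for Lemma~\ref{lem:type-4-enum-1}.}

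The plan is to combine Lemma~\ref{ST4.5} with Lemma~\ref{power} to reduce, for each admissible exponent, the study of the relevant factors of $F_{A^j,m}$ to factors of $F_{A,s}$ for a suitable $s$, and conversely. First I would fix $m\in\N$ with $Dm>2$ and let $f\in\I_{Dm}$ be an $[A]$-invariant. By Lemma~\ref{ST4.5}(i), there is a unique $\ell$ with $1\le \ell\le D-1$, $\gcd(\ell,D)=1$, such that $f$ divides $F_{A,s}$ with $s=\ell m$. Now I would apply Lemma~\ref{power} with $r=s=\ell m$ and $k=m$, so that $r/k=\ell$ is coprime to $D$: choosing $j$ with $j\ell\equiv 1\pmod D$ (equivalently $jm\equiv \ell^{-1}m\cdot\ell\equiv m\cdot\text{something}$ — more precisely $j$ is the inverse of $\ell$ mod $D$), the irreducible factors of $F_{A,\ell m}$ of degree $Dm$ are exactly the irreducible factors of $F_{A^j,m}$ of degree $Dm$. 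Note $\gcd(j,D)=1$ since $j$ is a unit mod $D$. Hence every degree-$Dm$ $[A]$-invariant appears as a degree-$Dm$ irreducible factor of $F_{A^j,m}$ for some $j$ coprime to $D$ with $1\le j\le D-1$.

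For the converse inclusion I would argue that any monic irreducible $g$ of degree $Dm$ dividing $F_{A^j,m}$ for some such $j$ is $[A]$-invariant. By Lemma~\ref{lem:aux-10}(i), dividing $F_{B,r}$ for some $r$ is equivalent to being $[B]$-invariant; so $g$ divides $F_{A^j,m}$ gives that $g$ is $[A^j]$-invariant, i.e.\ $[A^j]\circ g=g$, hence $[A^j]^t\circ g=g$ for all $t$. Since $\gcd(j,D)=1$, the class $[A^j]$ generates the same cyclic group $\langle[A]\rangle$ as $[A]$, so in particular $[A]\circ g=g$; that is, $g$ is an $[A]$-invariant of degree $Dm$. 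Combining the two inclusions yields the claimed equality of sets. (The sanity check that $F_{A^j,m}$ has no degree-$Dm$ factors lying outside $\I_{Dm}$ is automatic: such factors are irreducible of degree $Dm$ by construction.)

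One technical point worth making explicit: the indexing in Lemma~\ref{ST4.5} is phrased via $\ell$ with $s=\ell m$, while the lemma statement here is phrased via $j$ with $f\mid F_{A^j,m}$; the map $\ell\mapsto j:=\ell^{-1}\bmod D$ is a bijection of $\{1\le \ell\le D-1:\gcd(\ell,D)=1\}$ onto itself, which guarantees that as $j$ ranges over units mod $D$ in $[1,D-1]$ we capture every $[A]$-invariant exactly through the appropriate $F_{A^j,m}$, and that distinct invariants coming from distinct $\ell$ are correctly sorted. The main obstacle — really the only nontrivial point — is checking that Lemma~\ref{power} applies with the precise parameters $r=\ell m$, $k=m$, $r/k=\ell$ coprime to $D$, and that the congruence condition ``$jm\equiv 1\pmod D$'' in Lemma~\ref{power} matches ``$j\ell\equiv 1\pmod D$'' here; one must track that the ``$m$'' in Lemma~\ref{power}'s statement is the cofactor $r/k$, which is our $\ell$, not our $m$. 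Once the bookkeeping of these indices is aligned, the proof is a direct assembly of the quoted lemmas with no further computation.
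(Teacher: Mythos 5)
Your proof is correct and follows essentially the same route as the paper: apply Lemma~\ref{ST4.5}(i) to place each invariant of degree $Dm$ inside some $F_{A,\ell m}$, transfer to $F_{A^{j},m}$ via Lemma~\ref{power} with $j=\ell^{-1}\bmod D$ (correctly noting that the ``$m$'' of Lemma~\ref{power} is the cofactor $\ell$), and observe that $\ell\mapsto j$ permutes the units modulo $D$. Your explicit handling of the converse inclusion via Lemma~\ref{lem:aux-10}(i) together with the fact that $[A^{j}]$ generates $\langle [A]\rangle$ when $\gcd(j,D)=1$ is a detail the paper leaves implicit in the word ``exactly,'' but it is the same argument in substance.
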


\begin{proof}
According to Lemma~\ref{ST4.5}, the $[A]$-invariants of degree $Dm>2$ are exactly the irreducible factors of degree $Dm$ of $F_{A, \ell\cdot m}$, where $\ell$ runs over the positive integers $\le D-1$ such that $\gcd(\ell, D)=1$. Additionally, according to Lemma~\ref{power}, for each $\ell$, the following holds: if we set $j(\ell)$ as the least positive solution of $j\ell \equiv 1\pmod D$, the irreducible factors of degree $Dm$ of $F_{A, \ell\cdot m}$ are exactly the irreducible factors of degree $Dm$ of $F_{A^{j(\ell)}, m}$. Clearly $j(\ell)$ runs over the positive integers $j\le D-1$ such that $\gcd(j, D)=1$ (that is, $j(\ell)$ is a permutation of the numbers $\ell$).
\end{proof}

Now, it suffices to count the irreducible polynomials of degree $Dm$ that divide the polynomials $F_{A^j, m}$ for $j\le D-1$ and $\gcd(D, j)=1$. In this case, it is crucial to study the coefficients of $A^j$. When $A$ is an element of type $4$ in reduced form, we can obtain a complete description on the powers of $A$.

\begin{prop}\label{prop:powers-A}
If $A=D(c)$ is an element of type $4$, then \begin{equation}\label{eq:powers-A}A^j=\left( \begin{array}{cc}a_j&b_j\\ c_j & d_j\end{array}\right)=\delta \left( \begin{array}{cc}\alpha^{qj+1}-\alpha^{q+j} & \alpha^{q(j+1)+1}-\alpha^{q+j+1}\\
                \alpha^j-\alpha^{qj}& \alpha^{j+1}-\alpha^{q(j+1)}\end{array}\right), j\in \mathbb Z,\end{equation}
where $\alpha$ is an eigenvalue of $A$ and $\delta=(\alpha-\alpha^q)^{-1}$. In particular, if $D$ is the order of $[A]$, $c_j\ne 0$ for $1\le j\le D-1$.        
\end{prop}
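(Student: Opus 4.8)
The plan is to prove the closed form for $A^j$ by a direct diagonalization argument over $\F_{q^2}$, and then extract the nonvanishing statement for $c_j$ from the explicit formula. First I would note that, since $A=D(c)$ is of type $4$, its characteristic polynomial $x^2-x-c$ is irreducible over $\F_q$, so its two eigenvalues $\alpha,\alpha^q\in\F_{q^2}\setminus\F_q$ are distinct; hence $A$ is diagonalizable over $\F_{q^2}$ and $\alpha-\alpha^q\ne 0$, so $\delta=(\alpha-\alpha^q)^{-1}$ is well defined. I would exhibit an explicit eigenvector basis: from $D(c)\binom{1}{\lambda}=\binom{\lambda}{c+\lambda}$ one checks that $\binom{1}{\lambda}$ is an eigenvector for the eigenvalue $\lambda$ precisely when $\lambda^2=\lambda+c$, i.e.\ for $\lambda\in\{\alpha,\alpha^q\}$. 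Writing $P=\left(\begin{smallmatrix}1&1\\ \alpha&\alpha^q\end{smallmatrix}\right)$, we get $P^{-1}A P=\mathrm{diag}(\alpha,\alpha^q)$, and $P^{-1}=\delta\left(\begin{smallmatrix}\alpha^q&-1\\ -\alpha&1\end{smallmatrix}\right)$.

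Then $A^j=P\,\mathrm{diag}(\alpha^j,\alpha^{qj})\,P^{-1}$, and multiplying these three matrices out gives each entry as $\delta$ times a combination of the monomials $\alpha^{qj+1},\alpha^{q+j},\alpha^{q(j+1)+1},\alpha^{q+j+1},\alpha^j,\alpha^{qj},\alpha^{j+1},\alpha^{q(j+1)}$, matching Eq.~\eqref{eq:powers-A}; this is a routine $2\times 2$ computation that I would present compactly rather than term by term. Note that the identity is stated for all $j\in\Z$, which is legitimate since $A$ is invertible (indeed $\det D(c)=-c\ne 0$), so negative powers make sense and the diagonalization identity persists; alternatively one observes both sides are defined for all $j\in\Z$ and agree because the scalar identities $\alpha^{j}$, $\alpha^{qj}$ hold for all integer exponents.

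For the final assertion, I would use the formula $c_j=\delta(\alpha^j-\alpha^{qj})$. Since $\delta\ne 0$, $c_j=0$ if and only if $\alpha^j=\alpha^{qj}$, equivalently $\alpha^{(q-1)j}=1$. Now recall from Lemma~\ref{lem:order}(iv) that the order $D$ of $[A]$ is exactly the least positive integer with $\alpha^{(q-1)D}=1$: indeed that lemma's proof shows $[A]^k=[I]$ iff $\alpha^{(q-1)k}=1$. Hence for $1\le j\le D-1$ we cannot have $\alpha^{(q-1)j}=1$, so $c_j\ne 0$, as claimed.

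The computation itself is entirely mechanical, so the only place demanding care is the bookkeeping: making sure the conjugation $P\,\mathrm{diag}(\alpha^j,\alpha^{qj})\,P^{-1}$ is carried out with the correct $P$ and $P^{-1}$ so that every entry lands with the sign and exponent pattern displayed in~\eqref{eq:powers-A}, and confirming that the characterization of $D$ via $\alpha^{(q-1)D}=1$ is exactly what Lemma~\ref{lem:order} delivers. I expect the main (minor) obstacle to be purely presentational — choosing how much of the $2\times2$ multiplication to display — rather than any genuine mathematical difficulty.
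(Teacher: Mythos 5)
Your approach is the same as the paper's: diagonalize $A$ over $\F_{q^2}$, compute $A^j$ by conjugating $\mathrm{diag}(\alpha^j,\alpha^{qj})$, and then read off that $c_j=0$ iff $\alpha^{(q-1)j}=1$ iff $D\mid j$; the paper just uses the eigenvector matrix $M=\left(\begin{smallmatrix}\alpha^q&\alpha\\ -1&-1\end{smallmatrix}\right)$ instead of your $P$. Two small points of bookkeeping, precisely the kind you flagged as the only risk. First, $\det P=\alpha^q-\alpha=-\delta^{-1}$, so $P^{-1}=-\delta\left(\begin{smallmatrix}\alpha^q&-1\\ -\alpha&1\end{smallmatrix}\right)$, not $\delta\left(\begin{smallmatrix}\alpha^q&-1\\ -\alpha&1\end{smallmatrix}\right)$; with your sign the product comes out as $-A^j$. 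Second, if you actually carry out $P\,\mathrm{diag}(\alpha^j,\alpha^{qj})\,P^{-1}$ with genuine eigenvectors of $D(c)$, the off-diagonal entries land transposed relative to the displayed Eq.~\eqref{eq:powers-A}: you get $(1,2)$-entry $\delta(\alpha^j-\alpha^{qj})$ and $(2,1)$-entry $\delta(\alpha^{q(j+1)+1}-\alpha^{q+j+1})$. This is not your error so much as a transposition in the statement itself (test $j=1$: the displayed matrix evaluates to $\left(\begin{smallmatrix}0&c\\ 1&1\end{smallmatrix}\right)$ rather than $D(c)=\left(\begin{smallmatrix}0&1\\ c&1\end{smallmatrix}\right)$; correspondingly the paper's $M$ diagonalizes $D(c)^{T}$, not $D(c)$). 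Since $\alpha^{q(j+1)+1}-\alpha^{q+j+1}=\alpha^{q+1}(\alpha^{qj}-\alpha^{j})$, the two off-diagonal entries differ by the nonzero factor $-c$ and vanish simultaneously, so your argument for the final assertion, and everything downstream in the paper, is unaffected; but you should not assert that the product "matches" the displayed formula without checking, because literally it does not.
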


\begin{proof}
Since $A$ is of type $4$, $A$ is a diagonalizable matrix over $\F_{q^2}$ but not over $\F_q$ and we can write
\[
A= M \left( \begin{array}{cc}\alpha&0\\ 0&\alpha^{q}\end{array}\right) M^{-1},
\quad\text{
where}
\quad
M = \left( \begin{array}{cc}\alpha^q& \alpha\\ -1& -1\end{array}\right)
\]
is an invertible matrix and $\alpha$ is an eigenvalue of $A$. From now, Eq.~\eqref{eq:powers-A} follows by direct calculations. We see that $c_j=0$ if and only if $\alpha^j=\alpha^{qj}$. The latter is equivalent to $[A]^j=[I]$ and so $j$ must be divisible by $D$. In particular, for $1\le j\le D-1$, $c_j\ne 0$. 
\end{proof}

From Lemma~\ref{ST4.5}, in general, the irreducible factors of $F_{A^j, m}$ have degree divisible by $D$. The problem relies on counting the irreducible polynomials of degree one and two. From the previous proposition, we describe the linear and quadratic irreducible factors of $F_{A^j, m}$ as follows.

\begin{lemma}\label{lem:type-4-enum-2}
Suppose that $A=D(c)$ is an element of type 4 and order $D$. For any positive integers $j$ and $m$ such that $j\le D-1$ and $\gcd(j, D)=1$, the polynomial $F_{A^j, m}\in \F_q[x]$ has degree $q^m+1$, is free of linear factors and has at most one irreducible factor of degree $2$. In addition, $F_{A^j, m}$ has an irreducible factor of degree $2$ if and only if $m$ is even and, in this case, this irreducible factor is $x^2+c^{-1}x-c^{-1}$.
\end{lemma}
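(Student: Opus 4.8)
The plan is to work directly with the definition $F_{A^j,m}(x) = b_j x^{q^m+1} - a_j x^{q^m} + d_j x - c_j$, using the explicit description of the entries $a_j, b_j, c_j, d_j$ given in Proposition~\ref{prop:powers-A}. First I would record that the degree of $F_{A^j,m}$ equals $q^m+1$ exactly when $b_j \ne 0$; since $b_j = \delta(\alpha^{q(j+1)+1} - \alpha^{q+j+1}) = \delta\alpha^{q+1}(\alpha^{qj} - \alpha^{j})$ and $\alpha^{qj} \ne \alpha^j$ for $1 \le j \le D-1$ (this is the same computation that shows $c_j \ne 0$ in Proposition~\ref{prop:powers-A}), we get $b_j \ne 0$, so the degree is indeed $q^m+1$ and in particular $F_{A^j,m}$ is not the zero polynomial. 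The freedom from linear factors is immediate: a root $\beta \in \F_q$ of $F_{A^j,m}$ satisfies $b_j\beta^2 - a_j\beta + d_j\beta - c_j = 0$ (using $\beta^{q^m} = \beta$), i.e. $\beta$ is a fixed point in $\F_q$ of the Möbius map attached to $A^j$; but an element of type $4$ has no eigenvectors over $\F_q$, hence $[A^j]\circ\beta \ne \beta$ for all $\beta\in\F_q$, so there is no such root.

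Next I would handle the quadratic factors. An irreducible quadratic factor of $F_{A^j,m}$ has its roots in $\F_{q^2}\setminus\F_q$, and for $\beta \in \F_{q^2}$ one has $\beta^{q^m} = \beta$ if $m$ is even and $\beta^{q^m} = \beta^q$ if $m$ is odd. In the odd case, a root $\beta\in\F_{q^2}\setminus\F_q$ of $F_{A^j,m}$ would satisfy $b_j\beta^{q+1} - a_j\beta^q + d_j\beta - c_j = 0$; I would argue (via Lemma~\ref{lem:aux-10}, or directly) that this forces $[A^j]\circ\beta = \beta^q$, and by Lemma~\ref{ST4.5}(ii) applied with $r=m$ such a quadratic factor can only occur when the degree condition there degenerates — more cleanly, I would just observe that $F_{A^j,1}$ itself, which governs $m=1$, has degree $q+1$ and count: for $m$ odd, Lemma~\ref{power} relates factors of $F_{A^j,m}$ of small degree to factors of $F_{(A^j)^{j'},1}$, and it suffices to check that the type-4 matrix $D(c)$ (and its relevant powers) produce $F_{D(c),1}(x) = x^{q+1} - x^q + x - c$ with no quadratic factor over $\F_q$ — equivalently no $\beta\in\F_{q^2}\setminus\F_q$ with $\beta^{q+1} - \beta^q + \beta - c = 0$. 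In the even case $\beta^{q^m}=\beta$, so a quadratic factor has roots $\beta$ with $b_j\beta^2 - a_j\beta + d_j\beta - c_j = 0$, i.e. $b_j\beta^2 + (d_j - a_j)\beta - c_j = 0$; this is a single quadratic over $\F_q$, so $F_{A^j,m}$ has \emph{at most one} quadratic irreducible factor, and it has exactly one iff this quadratic $b_j x^2 + (d_j - a_j)x - c_j$ is irreducible over $\F_q$.

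It then remains to identify that quadratic. Using Proposition~\ref{prop:powers-A}, $b_j\ne 0$, and after dividing through by $b_j$ I would compute $(d_j - a_j)/b_j$ and $-c_j/b_j$ in terms of $\alpha$ and simplify. I expect everything to collapse because $\alpha$ is a root of $x^2 - x - c$, so $\alpha^q = 1 - \alpha$, $\alpha\alpha^q = -c$, $\alpha + \alpha^q = 1$; feeding these relations into the ratios should yield, independently of $j$, the monic quadratic $x^2 + c^{-1}x - c^{-1}$. (Sanity check at $j=1$: $A = D(c) = \begin{pmatrix}0 & 1\\ c & 1\end{pmatrix}$, so $a_1 = 0,\ b_1 = 1,\ d_1 = 1,\ c_1 = c$, giving $x^2 + x - c$ after dividing by $b_1=1$; dividing by $c$ — wait, one must divide by $b_1$ to make it monic, not by $c$ — hmm, $b_1 x^2 + (d_1-a_1)x - c_1 = x^2 + x - c$, which is monic already, and this should agree with $x^2 + c^{-1}x - c^{-1}$ up to the fact that both define the same splitting field; in fact $x^2 + x - c$ irreducible $\iff$ $x^2 + c^{-1}x - c^{-1}$ irreducible since the latter is $c^{-2}\cdot$ the former composed with $x \mapsto cx$... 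I would reconcile the normalization carefully in the writeup, as $[A]$-invariance only sees the polynomial up to the substitution $x\mapsto\lambda x$.) Finally I would verify that $x^2 + c^{-1}x - c^{-1}$ is irreducible over $\F_q$ precisely because $x^2 - x - c$ is: the former is obtained from the latter by a substitution $x \mapsto \lambda x$ followed by scaling (concretely, $\lambda = -c^{-1}$ up to the sign conventions), which preserves irreducibility.

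\textbf{Main obstacle.} The routine-but-delicate part is the bookkeeping in the last paragraph: keeping straight the normalization of $F_{A^j,m}$ versus monic representatives, and checking that the candidate quadratic $b_j x^2 + (d_j-a_j)x - c_j$ is \emph{both} (a) independent of $j$ up to the scaling $x\mapsto\lambda x$ that $[A]$-invariance tolerates, and (b) irreducible iff $m$ is even — where for $m$ even irreducibility is automatic from $x^2-x-c$ being irreducible, while for $m$ odd one must separately rule out any quadratic factor, which is where I would lean on Lemma~\ref{lem:type-4-enum-1}, Lemma~\ref{power}, and the parity of $q^m+1$. The conceptual content is small; the risk is a sign error in the conjugating matrix $M$ or in the definition of $[A]\circ\alpha$ propagating through.
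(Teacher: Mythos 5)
Your handling of the degree, the linear factors, and the even-$m$ case is essentially sound and close to the paper's own route: everything reduces to the quadratic $p_j(x)=b_jx^2+(d_j-a_j)x-c_j$, whose roots are the fixed points of the M\"obius map of $A^j$ and hence lie outside $\F_q$ (you should note explicitly that the relevant matrix is $A^j$, not $A$; its eigenvalues $\alpha^j,\alpha^{qj}$ lie outside $\F_q$ for $1\le j\le D-1$ because $\alpha^j=\alpha^{qj}$ would force $D\mid j$). The genuine gap is the case $m$ odd, where one must show $F_{A^j,m}$ has \emph{no} quadratic factor. Your proposed reduction via Lemma~\ref{power} does not apply: that lemma only controls the irreducible factors of degree $Dk$, and a quadratic factor has degree $2<D$ by Lemma~\ref{lem:order}(iv), so it is invisible to it. The direct reduction does work --- for $\gamma\in\F_{q^2}$ and $m$ odd, $\gamma^{q^m}=\gamma^q$, so $F_{A^j,m}(\gamma)=F_{A^j,1}(\gamma)$ --- but it lands you on $F_{A^j,1}$ for \emph{every} $j$ coprime to $D$, whereas the only computation you actually carry out is $j=1$ (and even there $F_{D(c),1}(x)=x^{q^{\vphantom{1}}+1}+x-c$, since $a_1=0$, not $x^{q+1}-x^q+x-c$). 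The missing idea is the paper's: from $b_j\gamma^{q+1}-a_j\gamma^q+d_j\gamma-c_j=0$, raise to the $q$-th power and subtract to obtain $(\gamma^q-\gamma)(a_j+d_j)=0$, hence $\Tr(A^j)=\alpha^j+\alpha^{qj}=0$; then $\alpha^{qj}=-\alpha^j$ gives $\alpha^{2j(q-1)}=1$, i.e.\ $D\mid 2j$, and $\gcd(j,D)=1$ forces $D\mid 2$, contradicting $D>2$ for type $4$. Without this (or an equivalent) argument the odd case is unproved.

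A second, smaller problem is your treatment of the identity of the quadratic factor. Your sanity check at $j=1$ correctly yields $x^2+x-c$, which does not coincide with the asserted $x^2+c^{-1}x-c^{-1}$ unless $c=1$; dismissing this on the grounds that the two quadratics generate the same extension, or that invariance ``tolerates'' $x\mapsto\lambda x$, is not a resolution, because the lemma names the literal irreducible factor of the literal polynomial $F_{A^j,m}$ and two distinct monic irreducible quadratics cannot both be it. What the argument actually requires is that $d_j-a_j$ and $b_j$ are fixed $\F_q$-multiples of $c_j$ independent of $j$ (this is how the paper pins down $p_j$ up to a scalar, using $\alpha+\alpha^q=1$ and $\alpha^{q+1}=-c$); your $j=1$ computation in fact indicates that the normalization coming out of Proposition~\ref{prop:powers-A} deserves rechecking rather than reconciling. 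None of this affects the degree count used in Theorem~\ref{thm:main-2}, but as written your proposal neither establishes independence of $j$ nor settles which quadratic occurs.
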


\begin{proof}
From definition, $F_{A^j, m}=b_jx^{q^m+1}-a_jx^{q^m}+d_jx-c_j$. From Proposition~\ref{prop:powers-A}, $c_j\ne 0$ if $1\le j\le D-1$. Therefore, hence $F_{A^j, m}$ has degree $q^m+1$ if $j\le D-1$. We split the proof into cases, considering the linear and quadratic irreducible polynomials.
\begin{enumerate}[(i)]
\item If $F_{A^j, m}$ has a linear factor, there exists $\gamma\in \F_q$ such that $F_{A^j, m}(\gamma)=0$. In this case, $\gamma^q=\gamma$ and a direct calculation yields $F_{A^j, m}(\gamma)=b_j\gamma^2+(d_j-a_j)\gamma-c_j$ and so $\gamma$ is a root of $p_j(x)=b_jx^2+(d_j-a_j)x-c_j$. Let $\alpha, \alpha^q$ be the eigenvalues of $A=D(c)$, hence $\alpha^q+\alpha=1$ and $\alpha^{q+1}=-c$. From Eq.~\eqref{eq:powers-A} and the previous equalities, we can easily deduce that $d_j-a_j=c_j$ and $b_j=cc_j$. Therefore, $p_j(x)$ equals $cx^2+x-1$ (up to a constant). This shows that $\gamma$ is a root of $x^2+c^{-1}x-c^{-1}$. However, since $A=D(c)$ is of type $4$, its characteristic polynomial $p(x)=x^2-x-c$ is irreducible over $\F_q$ and so is $x^2p(\frac{1}{x})=x^2+c^{-1}x-c^{-1}$. In particular, $\gamma$ is not an element of $\F_q$.
\item If $F_{A^j, m}$ has an irreducible factor of degree $2$, there exists $\gamma\in \F_{q^2}\setminus \F_q$ such that $F_{A^j, m}(\gamma)=0$. We observe that, in this case, $\gamma^{q^2}=\gamma$. For $m$ even, $F_{A^j, m}(\gamma)=b_j\gamma^2+(d_j-a_j)\gamma-c_j$ and in the same way as before we conclude that $x^2+c^{-1}x-c^{-1}$ is the only quadratic irreducible factor of $F_{A^j, m}$. If $m$ is odd, $\gamma^{q^m}=\gamma^q$ and equality $F_{A^j, m}(\gamma)=0$ yields $b_j\gamma^{q+1}-a_j\gamma^q+d_j\gamma-c_j=0$.
Raising the $q$-th power in the previous equality and observing that $\gamma^{q^2}=\gamma$, we obtain $$b_j\gamma^{q+1}-a_j\gamma+d_j\gamma^q-c_j=0,$$ and so $(\gamma^q-\gamma)(a_j+d_j)=0$. Since $\gamma$ is not in $\F_q$, the last equality implies that $a_j=-d_j$. However, from Eq.~\eqref{eq:powers-A}, we obtain $\alpha^{qj+1}-\alpha^{q+j}=\alpha^{q(j+1)}-\alpha^{j+1}$. Therefore, $(\alpha^q-\alpha)(\alpha^{qj}+\alpha^j)=0$. Recall that, since $A=D(c)$ is of type $4$, $\alpha$ is not in $\F_q$, i.e., $\alpha^q\ne \alpha$. Therefore $\alpha^{qj}=-\alpha^j$ and then $\alpha^{2qj}=\alpha^{2j}$. Again, from Eq.~\eqref{eq:powers-A}, this implies that $[A]^{2j}=1$, hence $2j$ is divisible by $D$. However, since $j$ and $D$ are relatively prime, it follows that $D$ divides $2$. This is a contradiction, since any element of type $4$ has order $D>2$ (see Lemma~\ref{lem:order}).
\end{enumerate}
\end{proof}

All in all, we finally add the enumeration formula for the number of $[A]$-invariants when $[A]$ is of type $4$, completing the proof of Theorem~\ref{thm:main-2}.

\begin{theorem}
Suppose that $A$ is an element of type $4$ and set $D=\ord([A])$. Then $\n_A(n)=0$ if $n$ is not divisible by $D$ and, for $n=Dm$,
$$\n_A(Dm)=\frac{\varphi(D)}{Dm}\sum_{d|m\atop \gcd(d, D)=1}(q^{m/d}+\epsilon(m/d))\mu(d),$$ 
where $\epsilon(s)=(-1)^{s+1}$. 
\end{theorem}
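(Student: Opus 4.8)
The plan is to count, for each admissible $j$ (that is, $1\le j\le D-1$ with $\gcd(j,D)=1$), the monic irreducible factors of degree exactly $Dm$ of $F_{A^j,m}$, and then sum over $j$ using Lemma~\ref{lem:type-4-enum-1}. First I would fix such a $j$ and analyze the full factorization of $F_{A^j,m}$. By Proposition~\ref{prop:powers-A} the leading coefficient $b_j$ is nonzero (indeed $b_j=cc_j$ and $c_j\ne0$), so $\deg F_{A^j,m}=q^m+1$, and moreover $F_{A^j,m}$ is squarefree over $\overline{\F}_q$ by the standard argument that $F_{A,r}$ has nonzero derivative (this is implicit in the structure results of \cite{ST12}). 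By Lemma~\ref{ST4.5}(ii) every irreducible factor of $F_{A^j,m}$ has degree $Dk$ for some $k\mid m$ with $\gcd(m/k,D)=1$, or degree at most $2$; and by Lemma~\ref{lem:type-4-enum-2} there are no linear factors and exactly one quadratic factor when $m$ is even, none when $m$ is odd.

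The key step is to set up a counting identity by comparing degrees. For each admissible $j$, let $P_j(k)$ denote the number of monic irreducible factors of $F_{A^j,m}$ of degree exactly $Dk$. By Lemma~\ref{power}, the factors of degree $Dk$ of $F_{A^j,m}$ coincide with the factors of degree $Dk$ of $F_{(A^j)^{j'},k}=F_{A^{jj'},k}$ where $j'(m/k)\equiv1\pmod D$; since $jj'$ is again coprime to $D$, this shows that the numbers $P_j(k)$ reorganize cleanly — summing over admissible $j$ the quantity $\sum_j P_j(k)$ for a fixed $k$ equals $\sum_{i}P_i(k)$ over admissible $i$ computed at the smaller modulus $k$, i.e. the aggregated factor counts are "self-similar" in $m$. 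Concretely, writing $N(Dm)=\sum_{j}P_j(m)=Dm\cdot\n_A(Dm)/\varphi(D)\cdot(\text{correction})$... rather, I would define $L(m)=\sum_{j \text{ adm.}}(q^m+1)$-type total and extract a Möbius relation. The cleanest route: count roots. Each admissible $j$ gives, via Lemma~\ref{lem:aux-10}, that the degree-$Dm$ factors of $F_{A^j,m}$ correspond bijectively to the $[A]$-invariants $f$ of degree $Dm$ with associated parameter $\ell=j^{-1}\bmod D$; each such $f$ has $Dm$ roots in $\F_{q^{Dm}}$. Counting the roots of $F_{A^j,m}$ in $\overline{\F}_q$: a root $\alpha$ satisfies $[A^j]\circ\alpha=\alpha^{q^m}$, equivalently $\alpha^{q^{Dm}}=\alpha$ after iterating (since $\gcd(j,D)=1$), so all $q^m+1$ roots lie in $\F_{q^{Dm}}$; partitioning them by the actual degree of the minimal polynomial gives $q^m+1=\sum_{k\mid m,\ \gcd(m/k,D)=1} Dk\cdot P_j(k) + (\text{contribution }0,1,\text{ or }2\text{ from small factors})$.

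Summing this root-count identity over all $\varphi(D)$ admissible $j$, and using Lemma~\ref{lem:type-4-enum-2} to pin down the small-degree contribution as $2$ per $j$ when $m$ is even and $0$ per $j$ when $m$ is odd — that is, $2\cdot\frac{1-(-1)^m}{2}\cdot\frac{?}{}$; more precisely the contribution is $2$ iff $m$ even, i.e. $1-\epsilon(m)$ where $\epsilon(m)=(-1)^{m+1}$ — I get $\varphi(D)(q^m+\epsilon(m)) = \sum_{k\mid m,\ \gcd(m/k,D)=1} Dk\cdot\Big(\sum_{j}P_j(k)\Big)$. Now set $\mathcal K(m)=Dm\cdot\n_A(Dm)/\varphi(D)$; by Lemma~\ref{lem:type-4-enum-1} one has $\sum_j P_j(k)=\n_A(Dk)$ for $k>1$ (and for $k=1$ the degree-$D$ factors, again $\n_A(D)$ when $D\ge 2$), so the displayed relation reads $q^m+\epsilon(m)=\sum_{k\mid m,\ \gcd(m/k,D)=1}\mathcal K(k)$, i.e. with $d=m/k$, $q^m+\epsilon(m)=\sum_{d\mid m}\chi_D(d)\mathcal K(m/d)$. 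Finally I apply Theorem~\ref{mobius} with the completely multiplicative function $\chi_D$ (principal Dirichlet character mod $D$) to invert: $\mathcal K(m)=\sum_{d\mid m}\chi_D(d)\mu(d)\big(q^{m/d}+\epsilon(m/d)\big)$, which rearranges to exactly the claimed formula $\n_A(Dm)=\frac{\varphi(D)}{Dm}\sum_{d\mid m,\ \gcd(d,D)=1}\mu(d)\big(q^{m/d}+\epsilon(m/d)\big)$. The main obstacle I anticipate is the bookkeeping at $k=1$ and verifying that $\epsilon$ is genuinely completely multiplicative in the relevant sense (it is not — $(-1)^{m+1}$ is not completely multiplicative), so the honest move is to absorb $\epsilon(m/d)$ into the \emph{data} $\mathcal L(m)=q^m+\epsilon(m)$ and apply Möbius inversion only to the multiplicative $\chi_D$, which is legitimate; double-checking that $\gcd(d,D)=1$ matches $\chi_D(d)\ne0$ throughout is the fiddly part, together with confirming via Lemma~\ref{lem:type-4-enum-2} that the small-factor count is uniform in $j$.
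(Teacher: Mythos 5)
Your proposal is correct and follows essentially the same route as the paper: reduce to $A=D(c)$ in reduced form, control the factorization of $F_{A^j,m}$ via Lemmas~\ref{lem:type-4-enum-1} and~\ref{lem:type-4-enum-2}, compare degrees to get the relation $q^m+\epsilon(m)=\sum_{k\mid m}\chi_D(m/k)\cdot Dk\cdot(\text{factor count})$, and invert with Theorem~\ref{mobius} using the principal character $\chi_D$. The only difference is presentational: you aggregate over the $\varphi(D)$ admissible exponents $j$ before inverting (making the self-consistency of the recursion under $j\mapsto jj'\bmod D$ explicit), whereas the paper performs the inversion for a fixed $j$ and sums over $j$ at the end.
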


\begin{proof}
From Theorem~\ref{thm:conjugates-main}, we can suppose that $A$ is in the reduced form, i.e., $A=D(c)$ for some $c$ such that $x^2-x-c$ is irreducible over $\F_q$.  For each positive integer $j$ such that $j\le D-1$ and $\gcd(j, D)=1$, let $n(j)$ be the number of irreducible factors of degree $Dm$ of $F_{A^j, m}$. From Lemma~\ref{lem:type-4-enum-1}, it follows that $$N_A(Dm)=\sum_{j\le D-1\atop \gcd(j, D)=1}n(j).$$
Fix $j$ such that $j\le D-1$ and $\gcd(j, D)=1$. According to Lemma~\ref{ST4.5}, the irreducible factors of $F_{A^j, m}$ are of degree $Dm$, of degree $Dk$, where $k$ divides $m$ and $\gcd(\frac{m}{k}, D)=1$ and of degree at most $2$. For each divisor $k$ of $m$ such that $\gcd(\frac{m}{k}, D)=1$, let $P_{k, m}$ be the product of all irreducible factors of degree $Dk$ of $F_{A^j, m}$ and let $\mathcal L(k)$ be the number of such irreducible factors. Also, set $$\varepsilon_m(x)=\gcd(F_{A^j, m}(x), x^2+c^{-1}x-c^{-1}).$$ Therefore, from Lemma~\ref{lem:type-4-enum-2}, we obtain the following identity
$$\frac{F_{A^j, m}}{\varepsilon_m(x)}=\prod_{k|m\atop \gcd(\frac{m}{k}, D)=1}P_{k, m}.$$
From Lemma~\ref{lem:type-4-enum-2}, $F_{A^j, m}$ has degree $q^m+1$ and the degree of $\varepsilon_m(x)$ is either $0$ or $2$, according to whether $m$ is odd or even. In particular, if we set $\epsilon(m)=(-1)^{m+1}$, taking degrees on the last equality we obtain:
$$q^m+1-\deg(\varepsilon_m(x))=q^m+\epsilon(m)=\sum_{k|m\atop \gcd(\frac{m}{k}, D)=1}\mathcal L (k)\cdot (kD)=\sum_{k|m}\mathcal L(k)\cdot (kD)\cdot \chi_{D}\left(\frac{m}{k}\right),$$
where $\chi_D$ is the {\em principal Dirichlet character modulo $D$}.
From Theorem~\ref{mobius}, we obtain
$$\mathcal L(k)\cdot kD=\sum_{d|k}(q^{k/d}+\epsilon(k/d))\cdot \mu(d)\cdot \chi_D(d)$$
for any $k\in \mathbb N$.
Therefore, $$\mathcal L(m)=\frac{1}{Dm}\sum_{d|m}(q^{m/d}+\epsilon(m/d))\cdot \mu(d)\cdot \chi_D(d)=\frac{1}{Dm}\sum_{d|m\atop \gcd(d, D)=1}(q^{m/d}+\epsilon(m/d))\mu(d).$$
From definition, $n(j)=\mathcal L(m)$ and so 
$$\n_A(Dm)=\sum_{j\le D-1\atop \gcd(j, D)=1}n(j)=\varphi(D)\cdot \mathcal L(m)=\frac{\varphi(D)}{Dm}\sum_{d|m\atop \gcd(d, D)=1}(q^{m/d}+\epsilon(m/d))\mu(d).$$
\end{proof}

\subsection{A remark on previous results}
In~\cite{D62}, the author explores the degree distribution of the polynomials $$(ax+b)x^{q^m}-(cx+d).$$ In the context of $[A]$-invariants, Theorem~5 of ~\cite{D62} can be read as follows:

``\emph{If $[A]\in \PGL_2(\F_q)$ has order $D$ and $m\ge 3$, then the number of $[A]$-invariants of degree $Dm$ equals}\\
\begin{equation}\label{eq:d62}\frac{\varphi(D)}{Dm}\sum_{d|m\atop{\gcd(d, D)=1}}\mu(d)q^{m/d}.\text{''}\end{equation}
We observe that Eq.~\eqref{eq:d62} disagrees with Theorem~\ref{thm:main-2} and turns out to be incorrent in many cases. For instance, if $q=2$ and $A=\left(\begin{matrix}
0&1\\
1&1
\end{matrix}\right)$, then $A$ is diagonalizable over $\F_{4}\setminus \F_2$ and $[A]$ has order $3$. If $r=3^k$, Theorem~\ref{thm:main-2} entails that $$\n_{A}(3r)=\n_{A}(3^{k+1})=\frac{2}{3^{k+1}}(2^{3^k}+1)\in \mathbb N.$$ 
However, Eq.~\eqref{eq:d62} provides $\n_{A}(3^{k+1})=\frac{2}{3^{k+1}}\cdot 2^{3^k}$, that is not even an integer. The lack of accurarcy in Eq.~\eqref{eq:d62} is, perhaps, due to the miscalculation of the linear and quadratic factors of the polynomials $(ax+b)x^{q^m}-(cx+d)$ in~\cite{D62}.

\begin{center}{\bf Acknowledgments}\end{center}
This work was partially conducted during a visit to Carleton University, supported by the Program CAPES-PDSE (process - 88881.134747/2016-01). The author was partially supported by FAPESP 2018/03038-2, Brazil.

\end{document}